
\documentclass[12pt]{amsart}
\usepackage[paper=a4paper,left=3.0cm,right=3.0cm,top=3.7cm,bottom=3.7cm]{geometry}
\usepackage{float, graphicx, amssymb, dsfont}
\usepackage[small,nohug,heads=vee]{diagrams}

\newtheorem{thm}{Theorem}[section]
\newtheorem{cor}[thm]{Corollary}
\newtheorem{lem}[thm]{Lemma}
\newtheorem{prop}[thm]{Proposition}

\theoremstyle{definition}
\newtheorem{defn}{Definition}[section]
\newtheorem{exam}[defn]{Example}

\theoremstyle{remark}
\newtheorem{rem}[defn]{Remark}
\numberwithin{equation}{section}
\newfloat{Figure}{h}{}[section]

\newcommand{\abs}[1]{\left\vert#1\right\vert}
\newcommand{\set}[1]{\left\{#1\right\}}
\newcommand{\A}{\mathcal{A}}    
\newcommand{\B}{\mathcal{B}}    
\newcommand{\D}{\mathcal{D}}
\newcommand{\F}{\mathcal{F}}    
\newcommand{\V}{\mathcal{V}}    
\newcommand{\E}{\mathcal{E}}    
\newcommand{\setS}{\mathcal{S}}    

\newcommand{\G}{\text{\textsf{G}}}
\newcommand{\X}{\text{\textsf{X}}}

\newcommand{\ifff}{if and only if}

\newcommand{\wee}{with equal entropy}

\newcommand{\To}{\Rightarrow}

\newcommand{\cto}{constant-to-one}
\newcommand{\fto}{finite-to-one}

\newcommand{\rr}{right resolving}

\newcommand{\bir}{bi-resolving}
\newcommand{\rc}{right closing}
\newcommand{\lc}{left closing}
\newcommand{\bic}{bi-closing}

\newcommand{\pXtoY}{\phi : X \to Y}

\newcommand{\pStoY}{\phi : \Sigma \to Y}
\newcommand{\preimage}[2] {{#1}^{-1}(#2)}

\newcommand{\preimageAbs}[2] {|{#1}^{-1}(#2)|}

\newcommand{\SFT}{shift of finite type}
\newcommand{\SFTs}{shifts of finite type}
\newcommand{\rSFT}{irreducible shift of finite type}
\newcommand{\rSFTs}{irreducible shifts of finite type}
\newcommand{\Sofic}{sofic shift}
\newcommand{\rSofic}{irreducible sofic shift}

\newcommand{\TFAE}{the following are equivalent}

\newcommand{\setN}{\mathbb{N}}

\newcommand{\setZ}{\mathbb{Z}}

\begin{document}

\title[Open maps between shift spaces]{Open maps between shift spaces}
\author{Uijin Jung}
\address{Department of Mathematical Sciences, Korea Advanced Institute of Science and Technology, Daejeon 305-701, Korea}
\email{uijin@kaist.ac.kr}

\thanks{2000 \textit{Mathematics Subject Classification.} Primary 37B10; Secondary 37B15, 37B40, 54H20.}
\thanks{This work was supported by the second stage of the Brain Korea 21 Project, The Development Project of Human Resources in Mathematics, KAIST in 2008. }
\keywords{sofic shift, shift of finite type, open, \cto, \bic}


\begin{abstract}
    Given a code from a shift space to an \rSofic, any two of the following three conditions -- open, \cto, (right or left) closing -- imply the third. If the range is not sofic, then the same result holds when \bic ness replaces closingness. Properties of open mappings between shift spaces are investigated in detail. In particular, we show that a closing open (or \cto) extension preserves the structure of a sofic shift.
\end{abstract}
\maketitle

\section{Introduction and Preliminaries}

A map between topological spaces is open if images of open sets are open. In symbolic dynamics, the openness of a sliding block code was first considered in \cite{Hed} and has been proved useful, especially in cellular automata \cite{Kur}. Hedlund showed that an endomorphism of a full shift is open if and only if it is \cto, i.e., the cardinality of the preimage of every point is same. In \cite{Nas}, Nasu extended this result to the \fto\ codes between \rSFTs\ and showed further that in that category, these conditions are also equivalent to the \bic ness of a code. In the sofic cases, the situation is different and any single condition does not imply the others in general. So far, the structure of open codes between \Sofic s are not well understood. In this paper, we investigate the properties of such codes and extend the result of Nasu to the class of sofic shifts. The main result is stated as follows.

\begin{thm} \label{thm:Main1}
    Let $\phi$ be a code from a shift space $X$ to an \rSofic\ $Y$. Then any two of the following conditions imply the third:
        \begin{enumerate}
            \item $\phi$ is open.
            \item $\phi$ is \cto.
            \item $\phi$ is \rc\ (or \lc).
        \end{enumerate}
    In the case where any two (hence all) of the above conditions hold,  $X$ is a nonwandering \Sofic.
\end{thm}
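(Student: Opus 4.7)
The overall strategy is to use the minimal right-resolving cover $\pi : Z \to Y$ of $Y$, where $Z$ is an irreducible SFT, and reduce each implication to Nasu's finite-to-one theorem for irreducible SFTs. I describe the right closing case; the left closing case is symmetric. Form the fiber product $\tilde X = \{(x,z) \in X \times Z : \phi(x) = \pi(z)\}$ with coordinate projections $q_X : \tilde X \to X$ and $q_Z : \tilde X \to Z$, satisfying $\phi \circ q_X = \pi \circ q_Z$.

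Since $\pi$ is the minimal right-resolving cover of an irreducible sofic shift, it is bi-resolving, constant-to-one, and open, and the map $q_X$ inherits these three properties directly from $\pi$. Conversely, each of the properties of $\phi$ pulls back to $q_Z$: right closingness and constant-to-oneness transfer by direct verification on finite windows, while openness transfers using openness of $\pi$. Hence whenever two of (1)--(3) hold for $\phi$, the corresponding two hold for $q_Z : \tilde X \to Z$. The plan is then to apply Nasu's theorem to $q_Z$, which requires showing that $\tilde X$ is an SFT; I expect this to follow in each case from the combination of hypotheses, since right closing plus constant-to-one (and, via a magic-word argument on $Y$ lifted through $\pi$, also open plus constant-to-one) forces enough local structure on $\tilde X$ through the SFT $Z$. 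Once Nasu yields the third property for $q_Z$, transferring back through $q_X$, which is right resolving, constant-to-one, and open, delivers the third property for $\phi$.

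For the final assertion, when all three conditions hold on $\phi$ the pullback analysis will give that $\tilde X$ is an irreducible SFT and $q_X$ is an onto factor code, so $X$ is sofic. Nonwandering of $X$ will then follow because $Y$ is nonwandering (being irreducible sofic) and a constant-to-one factor code reflects nonwandering points: for $x \in X$, pick periodic points in $Y$ approximating $\phi(x)$ with large return times, and lift them to $X$ using the local inverses provided by right closing plus constant-to-one.

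The principal technical obstacle is verifying that $\tilde X$ is an SFT under each pair of hypotheses and, closely related, that openness cleanly passes through the pullback. The most delicate single implication is $(1) + (2) \Rightarrow (3)$, where one must extract closing behaviour from the comparatively soft data of openness and constant-to-oneness; here a magic-word decomposition of $Y$, together with the bi-resolving structure of $\pi$, is the most promising route to conclude that the preimages of magic words are isolated in the appropriate sense, which forces right closingness.
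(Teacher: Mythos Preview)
Your overall architecture---form the fiber product over the canonical cover $\pi:Z\to Y$, lift the hypotheses to $q_Z$, apply Nasu, and push the conclusion back down---is exactly the paper's strategy for the two implications involving condition~(3). However, there is a genuine error that undermines your transfer mechanism.

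You assert that the minimal right-resolving cover $\pi$ is bi-resolving, constant-to-one, and open. This is false in general. The canonical cover is right resolving by construction, but it is left closing only when $Y$ is an AFT; for a general irreducible sofic shift it need be none of bi-resolving, constant-to-one, or open. The paper's own Example~\ref{ex:only_bic} (the even shift) exhibits a canonical cover that is bi-resolving yet explicitly neither constant-to-one nor open. Consequently $q_X$ does not inherit these properties, and your plan to ``transfer back through $q_X$'' collapses. The correct route back is not through $q_X$ at all: once $q_Z$ has the desired property, Lemma~\ref{lem:open_fiberproduct}(2) (and its analogues for the other properties) pushes it directly down to $\phi$, using only that $\pi$ is \emph{onto}. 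You should rework the descent step along these lines.

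Two further points. First, you wave your hands at the step ``$\tilde X$ is an SFT'', but this is where most of the paper's work lies (Propositions~\ref{lem:rc_open_then_nonwandering} and~\ref{lem:rc_cto_then_nonwandering}): one embeds $\tilde X$ into an ambient edge shift via an extended right-resolving map and then argues component-by-component using entropy and sink/source structure; this is not a formality. Second, for $(1)+(2)\Rightarrow(3)$ you propose a magic-word argument on $Y$, but the paper handles this implication by a short direct argument (Proposition~\ref{thm:open_cto_bic}) valid for arbitrary shift spaces, with no sofic hypothesis and no fiber product: if an open constant-to-one code were not bi-closing, two preimages could be made arbitrarily close, and a limit would produce a point whose fiber misses an open neighborhood of one of its preimages. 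Your route here is unnecessarily elaborate. Finally, $\tilde X$ is nonwandering but need not be irreducible, so ``$X$ is sofic'' comes from $X$ being a factor of a nonwandering SFT, not an irreducible one.
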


When $Y$ is not necessarily sofic, the same result still holds if we replace (3) with the condition that $\phi$ is \bic\ (see Theorem \ref{thm:Main2}).
It will also be shown that when the domain in Theorem \ref{thm:Main1} is of finite type and $\phi$ is \fto, then the code is open \ifff\ it is \cto\ (see Theorem \ref{thm:Y_Sofic}).

In the early of 1990's, Blanchard and Hansel proved that an irreducible sofic \cto\ extension of an \rSFT\ is also of finite type \cite{BH}. If either soficity or irreducibility is not assumed, then the statement is false \cite{Fie}. It turns out, however, that we need not assume the extension to be sofic and irreducible if the extension map is closing, i.e., \rc\ or \lc. We will prove that a \cto\ closing extension of an \rSFT\ is also of finite type (see Corollary \ref{cor:rcopen_ext}). This result can be viewed as a generalization of \cite{BH}, since the proof in \cite{BH} is essentially showing that the extension map is closing. We give an analogue of the above result for \rc\ open extensions. Indeed, assumed to be closing, an open code and a \cto\ code have similar properties. Some other interesting properties of open codes are provided.

We assume that the reader is familiar with elementary symbolic dynamics. For an introduction, see \cite{Kit} or \cite{LM}. For a general theory of topological dynamics, see \cite{Kur} or \cite{Wal}.

Let $\A$ be a finite set with discrete topology and consider the set $\A^\setZ$ with the product topology. It is compact and metrizable, and a typical element is $x = (x_i)_{i \in \setZ}$, with $x_i \in \A$. Together with a homeomorphism $\sigma$, called a \textit{shift map}, defined by $\sigma(x)_i = x_{i+1}$, it becomes a topological dynamical system called a \textit{full shift}. A \textit{shift space} or \textit{subshift} is a $\sigma$-invariant closed subset $X$ of a full shift, together with a restriction of $\sigma$ to $X$. We may also refer to $X$ as a shift space.

If $X$ is a shift space, then denote by $\B_n(X)$ the set of all words of length $n$ appearing in the points of $X$ and $\B(X) = \bigcup_n \B_n(X)$. A shift space $X$ is called \textit{nonwandering} if for all $u \in \B(X)$, we can find a word $w$ such that $uwu \in \B(X)$. It is called \textit{irreducible} if for all $u, v \in \B(X)$, there is a word $w$ with $uwv \in \B(X)$. So $X$ is irreducible \ifff\ there is a point in $X$ in which every word in $\B(X)$ occurs infinitely often to the left and to the right. We call such a point \textit{doubly transitive}. Indeed, if $X$ is irreducible, then such points are dense in $X$ and $(X,\sigma)$ is topologically transitive.

For $u \in \B(X)$, let ${}_l[u]$ denote the open and closed set $\{x \in X : x_{[l,l+|u|-1]}=u\}$, which we call a \emph{cylinder}. If $l \geq 0$ and $|u|=2l+1$, then ${}_{-l}[u]$ is called a \emph{central} $2l+1$ \emph{cylinder}. When $u \in \B_1(X)$ and $l=0$, we usually discard the subscript 0.

A code $\pXtoY$ is a continuous $\sigma$-commuting map between shift spaces. Any code can be recoded to a 1-block code. A code $\phi$ is called a \textit{factor} (\textit{conjugacy}, resp.) if it is onto (bijective, resp.) If $\phi$ is a conjugacy, then $X$ and $Y$ are called \textit{conjugate}.

Let $d$ be the metric on $X$ by letting $d(x,y) = 2^{-k}$, where $k$ is the maximal number with $x_{[-k,k]} = y_{[-k,k]}$ if $x \neq y$, and 0 otherwise. Two points $x$ and $\bar x$ in $X$ are \textit{left asymptotic} if $d(\sigma^{-n}(x),\sigma^{-n}(\bar x)) \to 0$ as $n \to \infty$. A code $\phi$ is called \textit{right closing} if it never collapses two distinct left asymptotic points. A 1-block code $\phi$ is called \textit{right resolving} if whenever $ab$, $ac$ are in $\B_2(X)$ and $\phi(ab) = \phi(ac)$, then we have $b=c$. A right closing code can be recoded to a right resolving code. Right asymptotic points, and left closing (resolving, resp.) properties are defined similarly. We call $\phi$ \textit{closing} if it is left or right closing. If $\phi$ is both left and right closing (resolving, resp.), then it is called \textit{\bic} (\textit{\bir,} resp.) We call $\phi$ \textit{\fto} if $\preimage{\phi}{y}$ is a finite set for all $y \in Y$. Closing codes are \fto. A \fto\ code is called \textit{\cto} if $\preimageAbs{\phi}{y}$ is independent of $y$. Note that a \cto\ code is a factor code.

A subshift $X$ is called a \textit{shift of finite type} if there is a finite set $\F$ of words such that $X$ consists of all points in which there's no occurrence of words in $\F$.
If $A$ is a nonnegative integral matrix, then a directed graph $\G_A$ (with the vertex set $\V$ and the edge set $\E$) and a shift space $\X_A$ whose elements are the set of all bi-infinite trips in $\G_A \subset {\E}^{\setZ}$ are naturally associated to $A$. We call $\X_A$ an \textit{edge shift}. It is well known that any shift of finite type is conjugate to an edge shift.
On the vertex set $\V$ of a graph $\G_A$, define an equivalence relation by setting $v \sim w$ if there is a path from $v$ to $w$ and vice versa. The vertices in each equivalence class, together with all edges whose endpoints are in this equivalence class, form a natural subgraph of $\G_A$, called an (irreducible) component of $\G_A$. For each component of $\G_A$, a subshift of $\X_A$ is naturally defined. If it is not empty, we call this subshift an \textit{(irreducible) component} of $\X_A$. It is irreducible and there is no subshift of $\X_A$ which is irreducible and contains a component of $\X_A$ properly. A component $X_0$ of $\X_A$ is called a \textit{sink component} if every point left asymptotic to $X_0$ is also in $X_0$. A \textit{source component} is defined similarly. All concepts defined on edge shifts naturally extend to the \SFTs\ by conjugacies. In general, an irreducible subshift $X_0$ of $X$ is an \textit{(irreducible) component} of $X$ if there is no irreducible subshift of $X$ which properly contains $X_0$.

A shift space is called \textit{sofic} if it is a factor of a \SFT. If $X$ is an \rSofic, then there exist an \rSFT\ $\X_A$ and a right resolving factor code $\pi : \X_A \to X$ with the condition that any right closing factor code from an \rSFT\ to $X$ factors through $\pi$. This pair is unique up to conjugacy. We call $(\X_A,\pi)$ the \textit{canonical cover} of $X$.

Let $h(X)$ denote the topological entropy of a topological dynamical system $(X,\sigma)$. By the Perron-Frobenius theory, if $X$ is irreducible sofic and $Z$ is a proper subshift of $X$, then $h(Z) < h(X)$. For a shift space $X$, let $\Omega(X)$ be a (unique) maximal nonwandering subshift of $X$. If $X$ is of finite type, then $\Omega(X)$ is equal to the (disjoint) union of its components. By the variational principle in ergodic theory, if $Z$ is a subshift of $X$ and $\Omega(X) = \Omega(Z)$, then $h(X) = h(Z) = h(\Omega(X))$. Finite-to-one codes preserve entropy in the sense that $h(X) = h(\phi(X))$. A component $X_0$ of a shift space $X$ is called \emph{maximal} if $h(X_0) = h(X)$.

In this paper, our focus is mainly on open codes between shift spaces. A code $\pXtoY$ is called an open code if images of open sets are open. It is easy to see the following equivalent condition of openness of a code.

\begin{lem}\label{obs_open}
    A code $\pXtoY$ between shift spaces is open \ifff\ for each $l \in \setN$, there is $k \in \setN$ such that whenever $x \in X, y \in Y$ and $\phi(x)_{[-k,k]} = y_{[-k,k]}$, we can find $\bar{x} \in X$ with $\bar{x}_{[-l,l]} = x_{[-l,l]}$ and $\phi(x) = y$.
\end{lem}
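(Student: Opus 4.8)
The plan is to unwind openness through a neighborhood basis. Since the central cylinders ${}_{-l}[x_{[-l,l]}]$, $l \in \setN$, form a neighborhood basis at $x$ in $X$ (and likewise in $Y$), the map $\phi$ is open precisely when, for every $x \in X$ and every $l$, the image $\phi({}_{-l}[x_{[-l,l]}])$ contains a central cylinder about $\phi(x)$; reading off what membership in those cylinders means turns this into the displayed condition, with the caveat that a priori the radius $k$ could depend on $x$ as well as on $l$. Thus the real content of the lemma is that $k$ may be chosen uniformly in $x$, and this is where compactness enters.

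For the implication ($\Leftarrow$), first I would fix an open set $U \subseteq X$ and a point $y' \in \phi(U)$, pick $x' \in U$ with $\phi(x') = y'$ and $l$ with ${}_{-l}[x'_{[-l,l]}] \subseteq U$, and then invoke the hypothesis to get $k$ with the stated property. Every $y$ agreeing with $y'$ on $[-k,k]$ then has a $\phi$-preimage $\bar x$ with $\bar x_{[-l,l]} = x'_{[-l,l]}$, hence $\bar x \in {}_{-l}[x'_{[-l,l]}] \subseteq U$, so the central $(2k+1)$-cylinder about $y'$ lies in $\phi(U)$; as $y'$ was arbitrary, $\phi(U)$ is open.

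For ($\Rightarrow$), fix $l$. For each word $u \in \B_{2l+1}(X)$ the cylinder ${}_{-l}[u]$ is clopen, hence compact, so $V_u := \phi({}_{-l}[u])$ is compact --- in particular closed --- and, since $\phi$ is open, also open; thus $V_u$ is clopen. A clopen subset of a subshift is a finite union of cylinders, and by enlarging the coordinate windows it can be written as a finite union of central $(2k_u+1)$-cylinders for a suitable $k_u$. Since $\B_{2l+1}(X)$ is finite, I can set $k := \max_u k_u$. Now suppose $\phi(x)_{[-k,k]} = y_{[-k,k]}$ and put $u := x_{[-l,l]}$; then $\phi(x) \in V_u$, and because $V_u$ is a union of central $(2k_u+1)$-cylinders with $k_u \le k$ and $y$ agrees with $\phi(x)$ on $[-k,k] \supseteq [-k_u,k_u]$, we also get $y \in V_u = \phi({}_{-l}[u])$. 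Any preimage $\bar x$ of $y$ inside ${}_{-l}[u]$ then satisfies $\bar x_{[-l,l]} = x_{[-l,l]}$ and $\phi(\bar x) = y$, as required.

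The hard part --- such as it is --- is exactly the uniform choice of $k$ in the forward direction; the rest is routine cylinder bookkeeping, resting on the facts that continuous images of compact sets are closed, that openness upgrades each $V_u$ to a clopen set, and that there are only finitely many central words of a given length.
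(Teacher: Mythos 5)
Your proof is correct. The paper offers no proof of this lemma (it is stated as ``easy to see''), so there is nothing to compare against; your argument is the standard one, and the key point you correctly isolate is that the uniformity of $k$ over $x$ follows from compactness: each $V_u := \phi({}_{-l}[u])$ is clopen, hence a finite union of central cylinders, and $\B_{2l+1}(X)$ is finite, so a single $k = \max_u k_u$ works. (One small note: the statement as printed contains a typo, writing $\phi(x)=y$ where $\phi(\bar x)=y$ is clearly intended; you read it correctly.)
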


So if $\phi$ is open, then for any $l \in \setN$ we can find $k \in \setN$ such that the image of a central $(2l+1)$ cylinder in $X$ consists of central $(2k+1)$ cylinders in $Y$. In the following we will often use these observations.

\vspace{1cm}

\section{Properties of open codes}\label{sec:openness}

In this section, we give some properties of open codes which are used in subsequent sections. We show that the image of a \SFT\ by an open code is also of finite type. Openness of a code is preserved by the fiber product. If combined with \cto\ property, an open code separates fibers and becomes a local homeomorphism.

\begin{lem}\label{lem:open_then_onto}
    Let $X$ and $Y$ be shift spaces with $Y$ irreducible. If $\pXtoY$ is an open code, then it is onto.
\end{lem}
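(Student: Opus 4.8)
The plan is to show that $\phi(X)$ is a clopen, $\sigma$-invariant subset of the irreducible shift $Y$, and then use topological transitivity of $Y$ to force $\phi(X)=Y$. I will tacitly assume $X\neq\emptyset$; otherwise $Y$, being irreducible, is empty too and there is nothing to prove.

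First I would record that $\phi(X)$ is a subshift of $Y$. It is closed in $Y$ since $X$ is compact and $\phi$ is continuous, so $\phi(X)$ is compact; and it is $\sigma$-invariant because $\sigma\phi=\phi\sigma$ together with $\sigma(X)=X$ gives $\sigma(\phi(X))=\phi(\sigma(X))=\phi(X)$. Next, applying the definition of openness of $\phi$ to the open set $U=X$ shows that $\phi(X)$ is open in $Y$. Hence $\phi(X)$ is a nonempty clopen $\sigma$-invariant subset of $Y$, and so is its complement $Y\setminus\phi(X)$.

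It then remains to rule out $Y\setminus\phi(X)\neq\emptyset$. Suppose it were nonempty. Since $Y$ is irreducible it contains a doubly transitive point $y$, whose orbit meets every cylinder of $Y$ and in particular meets the nonempty open set $\phi(X)$; say $\sigma^n(y)\in\phi(X)$. By $\sigma$-invariance of $\phi(X)$ this gives $y\in\phi(X)$, and since doubly transitive points are dense in $Y$ while $\phi(X)$ is closed, we conclude $\phi(X)=Y$, contradicting the assumption. Therefore $\phi(X)=Y$. (Equivalently, one can phrase the last step as the remark that a topologically transitive system cannot be written as a disjoint union of two nonempty open invariant sets.)

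There is no genuine obstacle in this argument; the whole point is the one-line observation that openness of $\phi$ makes $\phi(X)$ open, which combined with compactness makes it clopen. The only mildly delicate points are the degenerate case $X=\emptyset$ and the fact that ``$\phi$ is open'' is to be read in the subspace topology of $Y$, so that taking $U=X$ legitimately yields that $\phi(X)$ is open in $Y$ rather than merely in $\phi(X)$.
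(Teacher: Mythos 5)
Your argument is correct and follows the same route as the paper: show $\phi(X)$ is a nonempty clopen $\sigma$-invariant subset of $Y$, then invoke topological transitivity of $Y$ to conclude $\phi(X)=Y$. The paper simply cites Walters for the fact that a nonempty open invariant set in a transitive system is dense, whereas you spell this out with a doubly transitive point; the substance is identical.
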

\begin{proof}
    Note that $\phi(X)$ is a nonempty open and closed $\sigma$-invariant subset of $Y$. Since $(Y,\sigma)$ is topologically transitive, any nonempty open $\sigma$-invariant subset of $Y$ is dense in $Y$ \cite{Wal} and we get $\phi(X) = \overline {\phi(X)} = Y$. Thus $\phi$ is onto.
\end{proof}

\begin{lem} \cite{CP} \label{lem:cto_sft_then_sft}
    Let $X$ be an \rSFT\ and $\phi$ a \cto\ code from $X$ to a shift space $Y$. Then $Y$ is of finite type.
\end{lem}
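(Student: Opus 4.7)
The plan is to exhibit $Y$ as an $N$-step shift of finite type by exploiting the fiber structure inherited from constant-to-oneness. After a standard recoding I assume $\phi$ is a $1$-block code and $X$ is an edge shift $\X_A$; let $d$ denote the common cardinality of the fibers $\phi^{-1}(y)$. Because the $d$ preimages of every $y \in Y$ are distinct and both spaces are compact metric, a routine compactness argument produces $N_0 \in \setN$ such that for every $y \in Y$ the $d$ preimages of $y$ are already pairwise separated by their central $(2N_0+1)$-blocks. Set $N := 2N_0 + 1$ and, for $w \in \B_N(Y)$, write $P(w) := \{u \in \B_N(X) : \phi(u) = w\}$; the separation forces $|P(w)| \geq d$.

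The heart of the proof is a \emph{bi-extension} property for preimage blocks: whenever $w_1 w_2 \in \B(Y)$ with $|w_2| \geq N$, every $\beta \in P(w_2)$ admits an extension $\alpha\beta \in \B(X)$ with $\phi(\alpha) = w_1$, and symmetrically on the right. Intuitively each $\beta$ selects one of $d$ continuous branches of the fiber ``bundle'' $\phi$ over the cylinder determined by $w_2$, and such a branch must propagate over any larger cylinder containing it. If it failed to, one could iterate the failure over progressively longer left extensions $w_1$, pass to a compact limit in $Y$, and arrive at a point with strictly fewer than $d$ preimages, contradicting constant-to-oneness; irreducibility of $X$ is used here to guarantee that the preimage blocks constructed along the way actually occur in biinfinite $X$-points.

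Granted bi-extension, the SFT property follows quickly: given $uv, vw \in \B(Y)$ with $|v| \geq N$, choose any $\beta \in P(v)$, extend it backward through $u$ to $\alpha\beta \in \B(X)$ and forward through $w$ to $\beta\gamma \in \B(X)$, and splice. Since $X$ is $1$-step, $\alpha\beta\gamma \in \B(X)$, and its image under $\phi$ is $uvw$, whence $uvw \in \B(Y)$. The main obstacle is the bi-extension step itself: constant-to-oneness controls only the total fiber count and not the local propagation of branches, and ruling out a branch collapsing over a proper sub-cylinder is precisely what forces the diagonal compactness argument sketched above. Once bi-extension is secured, the remainder of the proof is purely combinatorial.
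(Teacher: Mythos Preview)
The paper does not supply a proof of this lemma; it is quoted from Coven--Paul \cite{CP} and then used as input to later arguments. So there is nothing in the paper to compare against, and I assess your argument on its own.

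There is a genuine gap at the first step. You assert that ``a routine compactness argument'' produces a uniform $N_0$ separating the $d$ preimages of every $y$; this is exactly the statement that $\phi$ is \bic\ (Lemma~\ref{lem:fs_iff_bic}), and it does \emph{not} follow from constant-to-oneness and compactness alone. If one runs the obvious compactness argument---choose $y_n$ with preimages $x_n\ne x'_n$ agreeing on $[-n,n]$, list $\phi^{-1}(y_n)=\{x_n^{(1)},\dots,x_n^{(d)}\}$, and pass to limits $y_n\to y$, $x_n^{(i)}\to z^{(i)}$---one concludes only that $\{z^{(1)},\dots,z^{(d)}\}\subset\phi^{-1}(y)$ has at most $d-1$ elements. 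That is no contradiction: $\phi^{-1}(y)$ still has $d$ points, one of which is simply not a subsequential limit of any $x_n^{(i)}$. Example~\ref{ex:only_cto} exhibits precisely this phenomenon: a $2$-to-$1$ code on an irreducible sofic $X$ that is not closing. The \rSFT\ hypothesis on $X$ is therefore essential at this very step, and it enters through the magic-word theory: a \fto\ $1$-block code of degree $d$ from an \rSFT\ has, in every fiber, $d$ mutually separated points \cite{LM}; when $\phi$ is $d$-to-$1$ these exhaust the fiber, and $\phi$ is \bic.

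The same flaw recurs in your bi-extension sketch: ``pass to a compact limit in $Y$ and arrive at a point with strictly fewer than $d$ preimages'' is the argument just shown to be invalid. Your outline also tacitly uses $|P(w)|=d$ (so that the $d$ distinct central blocks of the preimages of any $y$ through $w$ exhaust $P(w)$, forcing $\beta$ to appear), but you have only established $|P(w)|\ge d$. Once \bic ness is obtained correctly, however, the shortest route to the conclusion is not your block-extension combinatorics but the chain already available in the paper: \bic\ $+$ $d$-to-$1$ $\Rightarrow$ open (Proposition~\ref{thm:cto_then_open_iff_bic}), and open from a \SFT\ $\Rightarrow$ $Y$ is of finite type (Proposition~\ref{prop:open_sft_then_sft}).
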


This result on \cto\ codes is useful. Indeed, the code in the lemma is open (see Theorem \ref{thm:Nasu}).

\begin{prop}\label{prop:open_sft_then_sft}
    Let $X$ be a \SFT\ and $\phi$ an open factor code from $X$ to a shift space $Y$. Then $Y$ is of finite type.
\end{prop}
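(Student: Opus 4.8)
The plan is to reduce to the case where $X$ is an edge shift and $\phi$ is a $1$-block code, using a conjugacy and a recoding. Since $X$ is of finite type, it is conjugate to some edge shift $\X_A$, and by recoding we may assume $\phi : \X_A \to Y$ is a $1$-block code. Then I want to exhibit a finite list of forbidden words that defines $Y$; the natural guess is that there is an integer $N$ (coming from the openness constant in Lemma \ref{obs_open}) such that $Y$ is the $N$-step shift of finite type determined by $\B_{N}(Y)$, i.e. $y \in Y$ iff every window $y_{[i,i+N-1]}$ of length $N$ lies in $\B_N(Y)$.

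The key step is to produce, for a given length $l$, the openness constant $k$ from Lemma \ref{obs_open}: whenever $\phi(x)_{[-k,k]} = y_{[-k,k]}$ for some $x \in X$, $y \in Y$, there is $\bar x \in X$ agreeing with $x$ on $[-l,l]$ with $\phi(\bar x) = y$. I would choose $l$ large enough to dominate the memory of the edge shift $\X_A$ (so that concatenability of $A$-paths is detected on windows of length $l$), fix the corresponding $k$, and set $N = 2k+1$. Now suppose $w = w_0 \cdots w_{m-1}$ is a word with $m \geq N$ all of whose length-$N$ subwords lie in $\B_N(Y)$; I must show $w \in \B(Y)$. Proceeding inductively on $m$: assuming a prefix of $w$ has been realized as $\phi(x)$ for some $x \in X$ with the relevant boundary data, the fact that the next length-$N$ window $w_{[\,\cdot\,,\,\cdot+N-1]}$ is in $\B_N(Y)$ means it equals a central $(2k+1)$-block of some point $y \in Y$; applying openness at that coordinate lets me adjust $x$ on a window of length $\leq 2l+1$ to keep it a legal $A$-path while extending the $\phi$-image one symbol further. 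Splicing these adjustments along $w$ and passing to a limit (compactness of $\X_A$) yields a point of $X$ whose image contains $w$, hence $w \in \B(Y)$.

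The main obstacle is the bookkeeping in the induction: one has to keep track not only of the $\phi$-image but also of enough of the underlying $\X_A$-path to guarantee that successive local modifications can be concatenated into a genuine bi-infinite point of $X$. This is exactly why $l$ is chosen large relative to the step size of $\X_A$ — the overlap of length $l$ between consecutive windows carries enough information to glue the $A$-paths. One also needs $\phi$ to be onto (so that $\B_N(Y)$ really is the set of length-$N$ $\phi$-images of $\B$-words of $X$), but that is automatic here since $\phi$ is assumed to be a factor code; alternatively, if $Y$ is irreducible one could invoke Lemma \ref{lem:open_then_onto}. Finally, one checks the trivial inclusion: $Y$ is always contained in the $N$-step SFT defined by $\B_N(Y)$, so the two shifts coincide and $Y$ is of finite type.
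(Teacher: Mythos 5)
Your high-level plan matches the paper's: recode so that $X$ is an edge shift and $\phi$ is $1$-block, extract an openness constant from Lemma \ref{obs_open}, and conclude that $Y$ is a step shift of finite type. Where you diverge is in how you extract the step condition, and that is where the proposal has a real gap.

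The paper avoids any induction or limit argument by performing a \emph{single} splice. Fix $l$ so that $\phi([a])$ is a union of central $(2l{+}1)$-cylinders for each single symbol $a \in \B_1(X)$ (so the ``window in $X$'' is just one edge; you do not need $l$ ``large enough to dominate the memory'' of $\X_A$, since an edge shift has memory one). Given $uw, wv \in \B(Y)$ with $|w| = 2l+1$, choose $y^{(1)}, y^{(2)}$ realizing these at the obvious coordinates and lift $y^{(1)}$ to some $x$. Because $y^{(1)}$ and $y^{(2)}$ share the central $(2l{+}1)$-block $w$ and $\phi([x_0])$ is a union of such cylinders containing $y^{(1)}$, it must also contain $y^{(2)}$; so there is $z \in [x_0]$ with $\phi(z)=y^{(2)}$. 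Since $x_0 = z_0$ and $\X_A$ is an edge shift, the point $\bar x$ defined by $\bar x_i = x_i$ for $i \le 0$ and $\bar x_i = z_i$ for $i \ge 0$ lies in $X$, and $\phi(\bar x)$ contains $uwv$. That directly shows $Y$ is a $(2l{+}1)$-step SFT, with no iteration needed.

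Your inductive version, by contrast, never resolves the ``bookkeeping'' problem you yourself flag, and in fact the phrase ``adjust $x$ on a window of length $\le 2l+1$'' misstates what openness gives you. Lemma \ref{obs_open} guarantees a $\bar x$ agreeing with $x$ on $[-l,l]$ whose image is the prescribed $y$, but it says nothing about where $\bar x$ differs from $x$ outside $[-l,l]$; the ``adjustment'' can be global. Consequently, when you apply openness at the next coordinate to extend the image one symbol further, the new point may destroy the part of the image you built to the left, and the compactness limit need not have the full word $w$ in its image. To repair this you would essentially be driven back to the single-splice argument: realize the left half and the right half of $w$ separately, lift each, and glue at one common edge. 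Once you see that the step length $2l+1$ is precisely the amount of overlap the openness constant hands you, there is no induction to run.
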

\begin{proof}
    By recoding, we can assume that $X$ is an edge shift and $\phi$ is 1-block. Choose $l \geq 0$ so that for each $a \in \B_1(X)$, $\phi([a])$ consists of central $2l+1$ cylinders.

    Let $uw$ and $wv$ be in $\B(Y)$ with $\abs{w} = 2l+1$. Then there are points $y^{(1)}$ and $y^{(2)}$ such that $y^{(1)}_{[-l-|u|,l]}=uw$ and $y^{(2)}_{[-l,l+|v|]}=wv$. Take $x \in \preimage{\phi}{y^{(1)}}$ and consider $\phi([x_0])$. It contains $y^{(1)}$ and $y^{(1)}_{[-l,l]} =y^{(2)}_{[-l,l]} = w$. Hence $y^{(2)} \in \phi([x_0])$ and there is a point $z \in [x_0]$ with $\phi(z) = y^{(2)}$. Now define ${\bar x}$ by ${\bar x}_i = x_i$ for $i \leq 0$ and $ {\bar x}_i = {z}_i$ for $i \geq 0$. Since $\phi(\bar x)_{[-l-|u|,l+|v|]} = uwv$, we have $uwv \in \B(Y)$. Thus $Y$ is a ($2l+1$)-step shift of finite type.
\end{proof}

Let $X$, $Y$, and $Z$ be shift spaces and $\phi_1 : X \to Z$, $\phi_2 : Y \to Z$ the codes. Then \emph{the fiber product} of $(\phi_1,\phi_2)$ is the triple $(\Sigma,\psi_1,\psi_2)$ where
    $$\Sigma = \set{ (x,y) \in X \times Y : \phi_1(x) = \phi_2(y) }$$
and $\psi_1 : \Sigma \to X$ is defined by $\psi_1(x,y) = x$; similarly for $\psi_2$.

Fiber product construction is useful since it lifts over or pulls down the properties of given codes to the opposite codes. More specifically, if $\phi_1$ is one-to-one, onto, \rc, \rr, or finite-to-one, then $\psi_2$ also satisfies the corresponding condition. In the case $\phi_2$ is assumed to be onto, if $\psi_2$ has any of the properties mentioned above, then so does $\phi_1$ \cite{LM}. It can be easily shown that \cto\ property is also preserved in this way.
Now we show that openness is also preserved by the fiber product.

\begin{lem} \label{lem:open_fiberproduct}
    Let $(\Sigma,\psi_1,\psi_2)$ be the fiber product of $\phi_1 : X \to Z$ and $\phi_2 : Y \to Z$.
    \begin{enumerate}
        \item
            If $\phi_1$ is open, then so is $\psi_2$.
        \item
            Let $\phi_2$ be onto. If $\psi_2$ is open, then so is $\phi_1$.
    \end{enumerate}
\end{lem}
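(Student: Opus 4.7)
My plan is to combine Lemma~\ref{obs_open} with its sequential rephrasing: a continuous map $\phi$ between compact metric shift spaces is open \ifff\ every convergent sequence $z_n \to \phi(x)$ in the range lifts to a convergent sequence $x_n \to x$ in the domain with $\phi(x_n) = z_n$. (The equivalence of the two forms is routine, via diagonalization for one direction and a compactness/subsequence argument for the other.) I will use the combinatorial form of Lemma~\ref{obs_open} for (1) and the sequential form for (2).

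For (1), after 1-block recoding, fix $l \in \setN$ and let $k \geq l$ be supplied by openness of $\phi_1$. Given $(x, y) \in \Sigma$ and $\bar y \in Y$ with $y_{[-k,k]} = \bar y_{[-k,k]}$, the 1-block property of $\phi_2$ gives $\phi_2(\bar y)_{[-k,k]} = \phi_2(y)_{[-k,k]} = \phi_1(x)_{[-k,k]}$, so openness of $\phi_1$ produces $\bar x \in X$ with $\bar x_{[-l,l]} = x_{[-l,l]}$ and $\phi_1(\bar x) = \phi_2(\bar y)$. The pair $(\bar x, \bar y) \in \Sigma$ satisfies $\psi_2(\bar x, \bar y) = \bar y$ and agrees with $(x, y)$ on $[-l, l]$, so Lemma~\ref{obs_open} confirms $\psi_2$ is open.

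For (2), fix $x \in X$ and $z_n \to \phi_1(x)$ in $Z$, and use surjectivity of $\phi_2$ to choose $y_n \in \phi_2^{-1}(z_n)$. Since $Y$ is compact, any subsequence of $\{y_n\}$ has a further subsequence $y_{n_{k_j}} \to y^\ast$, and continuity of $\phi_2$ forces $\phi_2(y^\ast) = \phi_1(x)$, so $(x, y^\ast) \in \Sigma$. Invoking the sequential form of openness of $\psi_2$ at $(x, y^\ast)$ yields $(x_j, y_{n_{k_j}}) \to (x, y^\ast)$ in $\Sigma$; in particular $x_j \to x$ with $\phi_1(x_j) = \phi_2(y_{n_{k_j}}) = z_{n_{k_j}}$. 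Thus every subsequence of $\{z_n\}$ contains a further subsequence whose preimages under $\phi_1$ can be chosen convergent to $x$, which forces the minimal distances $d_n := \inf\{d(w, x) : w \in \phi_1^{-1}(z_n)\}$ to tend to $0$; selecting $x_n \in \phi_1^{-1}(z_n)$ with $d(x_n, x) \leq d_n + 1/n$ completes the proof.

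The delicate point is the selection step in (2): because $\phi_2$ is not assumed open, the preimages $y_n$ of $z_n$ cannot be prescribed to converge to a specified point of $\phi_2^{-1}(\phi_1(x))$. The remedy is that openness of $\psi_2$ applies at \emph{any} fibre point of $\Sigma$ above $x$, and compactness of $Y$ delivers such a fibre point $(x, y^\ast)$ along every subsequence of $\{y_n\}$. Surjectivity of $\phi_2$ is used both to manufacture the $y_n$ and to guarantee that at least one $y^\ast \in \phi_2^{-1}(\phi_1(x))$ exists, so the hypothesis cannot be dropped from (2).
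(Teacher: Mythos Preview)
Your proof is correct and follows essentially the same approach as the paper. For (1) the arguments are identical up to your recoding of $\phi_2$ to a 1-block map versus the paper's tracking of the coding length $m$; for (2) the paper argues by contrapositive---assuming $\phi_1$ not open, lifting the offending sequence $z^{(n)}$ through $\phi_2$, passing to a subsequential limit $y$, and exhibiting $V=(U\times Y)\cap\Sigma$ as an open set with non-open $\psi_2$-image---whereas you run the same lift-and-limit idea in the forward direction via the sequential lifting criterion, which costs you an extra subsequence-of-subsequence step but is equivalent in content.
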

\begin{proof}
    (1) Suppose $\phi_1$ is open. Let $C$ be the central $2l+1$ cylinder in $\Sigma$, containing a point $(x,y)$ with $x \in X$ and $y \in Y$. To show $\psi_2$ is open, it suffices to find an open neighborhood of $y$, contained in $\psi_2(C)$. Let $m$ be a coding length of $\phi_2$. Choose $n \ge l$ so that $\phi_1({}_{-l}[x_{-l} \cdots x_l])$ is a disjoint union of central $2n+1$ cylinders. Now, if $\bar y \in Y$ satisfies ${\bar y}_{[-n-m,n+m]} = y_{[-n-m,n+m]}$, then $\phi_2(\bar y)_{[-n,n]} = \phi_2(y)_{[-n,n]}$. Since $\phi_1({}_{-l}[x_{-l} \cdots x_l])$ contains $\phi_2(y)$ and the central $2n+1$ blocks of $y$ and $\bar y$ are equal, it follows that $\phi_2(\bar y) \in \phi_1({}_{-l}[x_{-l} \cdots x_l])$. Hence there is an $\bar x \in {}_{-l}[x_{-l} \cdots x_l]$ such that $\phi_1(\bar x) = \phi_2(\bar y)$. Since $n \ge l$, we have ${\bar y}_{[-l,l]} = y_{[-l,l]}$, so $(\bar x,\bar y)$ is in $C$ satisfying $\psi_2(\bar x,\bar y) = \bar y$. Thus $\psi_2(C)$ contains ${}_{-l}[y_{-n-m} \cdots y_{n+m}]$, as desired.

    (2) Suppose $\phi_1$ is not open. Then there is an open set $U$ of $X$ such that $\phi_1(U)$ is not open. Hence we can find an $x \in U$ and a sequence $\set{z^{(n)}}_{n=1}^{\infty}$ in $Z$ such that $z^{(n)} \notin \phi_1(U)$ for any $n$ and $z^{(n)} \to \phi_1(x)$. Take points $y^{(n)} \in {\phi_2^{-1}}(z^{(n)})$. By taking a subsequence, we may assume $y^{(n)}$ converges to a point $y \in Y$. Then $\phi_2(y) = \phi_1(x)$, so $(x,y)$ is in $\Sigma$. Now consider an open set $V = ( U \times Y ) \cap \Sigma$. Since $(x,y) \in V$, it follows that $\psi_2(V)$ contains $y$.
    But $y^{(n)}$ is not in $\psi_2(V)$ for any $n$, since $$(\phi_2 \circ \psi_2)(V) = (\phi_1 \circ \psi_1)(V) = \phi_1(U).$$
    Thus $\psi_2(V)$ is not open. Therefore $\psi_2$ is not an open code.
\end{proof}

\begin{lem} \label{lem:open_has_degree}
    Let $\phi$ be a \fto\ open code from a shift space $X$ to an irreducible shift space $Y$. Then there is $d > 0$ such that $\preimageAbs{\phi}{y} = d$ for each doubly transitive point $y$ of $Y$. Furthermore, $\preimageAbs{\phi}{y} \leq d$ for all $y \in Y$.
\end{lem}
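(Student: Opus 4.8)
The plan is to exploit two consequences of openness: that the fiber-cardinality function $f(y) := \preimageAbs{\phi}{y}$ is lower semicontinuous, and that a sufficiently long common central block of $\phi(x)$ and $y$ allows one to adjust $x$ to a genuine preimage of $y$ (Lemma \ref{obs_open}). Throughout, $f$ is $\sigma$-invariant because $\phi\sigma=\sigma\phi$ and $\sigma$ is a bijection, and $\phi$ is onto by Lemma \ref{lem:open_then_onto}.

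First I would prove lower semicontinuity of $f$. If $\preimage{\phi}{y_0}=\set{x^1,\dots,x^n}$, pick pairwise disjoint open sets $U_i\ni x^i$; openness makes each $\phi(U_i)$ an open neighbourhood of $y_0$, so $V:=\bigcap_{i=1}^n\phi(U_i)$ is one too, and every $y\in V$ has a preimage in each $U_i$, hence $f(y)\ge n=f(y_0)$. Thus $\set{y:f(y)\ge n}$ is open for each $n$, and, being $\sigma$-invariant, it is dense in $Y$ whenever nonempty, by topological transitivity (as in the proof of Lemma \ref{lem:open_then_onto}). If $f$ were unbounded, all these sets would be nonempty dense open subsets of the compact metric (hence Baire) space $Y$, so their intersection would be nonempty — a point with an infinite fiber, contradicting that $\phi$ is \fto. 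Hence $f$ is bounded; set $d:=\max_{y\in Y}f(y)$, which is $>0$ since $\phi$ is onto, and trivially $f(y)\le d$ for all $y$.

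The core is then a magic-word argument producing a fixed word $w\in\B(Y)$ with $f(z)=d$ whenever $w$ occurs in $z$. I would choose $y^*$ with $f(y^*)=d$, write $\preimage{\phi}{y^*}=\set{x^1,\dots,x^d}$, pick $N$ so large that $x^1_{[-N,N]},\dots,x^d_{[-N,N]}$ are pairwise distinct, let $k$ be the integer that Lemma \ref{obs_open} provides for $l=N$, and set $w:=y^*_{[-k,k]}$. If $z\in Y$ contains $w$, apply a power of $\sigma$ so that $z_{[-k,k]}=w$; then for each $i$ we have $\phi(x^i)_{[-k,k]}=y^*_{[-k,k]}=z_{[-k,k]}$, so Lemma \ref{obs_open} yields $\bar x^i$ with $\bar x^i_{[-N,N]}=x^i_{[-N,N]}$ and $\phi(\bar x^i)=z$. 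These $\bar x^i$ are pairwise distinct, so $f(z)\ge d$, hence $f(z)=d$. Since a doubly transitive point of $Y$ contains every word of $\B(Y)$, in particular $w$, it has exactly $d$ preimages, which finishes the proof.

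The main obstacle, I think, is the step that $f$ is bounded — equivalently, that a \fto\ code has uniformly bounded fibers; openness is exactly what makes the short Baire-category argument available, whereas in general one falls back on a pumping argument on diamonds. The magic-word step needs care for a subtler reason: lower semicontinuity alone does not determine $f$ at the doubly transitive points even though $f\equiv d$ on a dense set, so one genuinely has to transfer the $d$ preimages of $y^*$ along the common block $w$ via Lemma \ref{obs_open}.
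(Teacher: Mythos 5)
Your proof is correct, but it follows a genuinely different route from the paper's. The paper anchors the degree from the start: it sets $d = \preimageAbs{\phi}{z}$ for a chosen doubly transitive $z$, then derives a contradiction directly if some $y$ had more than $d$ preimages, by taking $d+1$ disjoint neighborhoods of preimages of $y$ and observing that a sequence from the (dense) orbit of $z$ converging to $y$ must infinitely often miss one fixed neighborhood $U_j$, so $\phi(U_j)$ is not open. A symmetric argument with two doubly transitive points finishes. Your approach instead isolates the key mechanism as lower semicontinuity of $f(y)=\preimageAbs{\phi}{y}$ (which the paper uses implicitly inside its contradiction), then invokes Baire category to get uniform boundedness and defines $d$ as the global maximum, and finally transfers the $d$ preimages of a maximizing point $y^*$ to any point containing a long enough central block $w$ of $y^*$ via Lemma \ref{obs_open}. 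Both arguments are sound. What your route buys is an explicit, reusable structural fact (lower semicontinuity of the fiber-cardinality function under openness) and an explicit uniform bound via Baire rather than via density of a single orbit; what it costs is an extra step. In fact your magic-word step can be shortened: once $d=\max f$, the set $\set{y : f(y)=d}=\set{y : f(y)\ge d}$ is nonempty, open, and $\sigma$-invariant, and any doubly transitive point lies in every nonempty open $\sigma$-invariant subset of $Y$ (its forward orbit meets it, and $\sigma$-invariance pulls it back), so $f$ equals $d$ on the doubly transitive points immediately.
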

\begin{proof}
    Take a doubly transitive point $z$ of $Y$ and let $d = \preimageAbs{\phi}{z}$. Suppose that there exists a point $y \in Y$ such that $\preimageAbs{\phi}{y} > d$. Take distinct points $u_1 , \cdots , u_{d+1} \in \preimage{\phi}{y}$. Since $X$ is a compact metric space, we can take disjoint neighborhoods $U_1 , \cdots , U_{d+1}$ of $u_1 , \cdots , u_{d+1}$, respectively. Take a sequence $\{y^{(i)}\}_{i=1}^{\infty} \subset \{\sigma^k (z) \}_{k \in Z}$ such that $y^{(i)} \rightarrow y$.

    Since $\preimageAbs{\phi}{y^{(i)}} = d$, for each $i$ there exists $j(i)$ such that $U_{j(i)} \cap \preimage{\phi}{y^{(i)}} = \emptyset$. By taking a subsequence of $i$ such that $j(i)$ is constant, without loss of generality we may assume that there exist a sequence $\{y^{(i)}\}_{i=1}^\infty$ and $j$ such that $y^{(i)} \rightarrow y$ and $U_{j} \cap \phi^{-1} (  y^{(i)}  ) = \emptyset$ for all $i$.

    Consider $\phi(U_j ) \subset Y$. Note that $y^{(i)} \rightarrow y \in \phi(U_j)$, but $y^{(i)}$ is not in $\phi(U_j)$  for any $i$, hence $\phi(U_j)$ is not open. Thus $\phi$ is not an open code and a contradiction comes. Hence $\preimageAbs{\phi}{y} \leq d$ for all $y$ in $Y$. If $\bar z$ is another transitive points of $Y$, then above argument shows that $ \preimageAbs{\phi}{\bar z} = \preimageAbs{\phi}{z} = d$. So each doubly transitive point of $Y$ has $d$ preimages.
\end{proof}

Let $\pXtoY$ be a factor code between shift spaces. If there is $d > 0$ such that every doubly transitive point of $Y$ has $d$ preimages under $\phi$, we call $d$ the \textit{degree} of $\phi$. If $X$ is an \rSFT\ and $\phi$ is \fto, then $\phi$ has a degree $d$ and $\preimageAbs{\phi}{y} \geq d$ for all $y$ in $Y$. Hence the above lemma implies that any \fto\ open code has a degree and is $d$-to-1 if the domain is an \rSFT.

We will use the following lemma to prove that any \bic\ factor code also has a well-defined degree.

\begin{lem} \label{lem:fs_iff_bic}
    Let $\pXtoY$ be a code between shift spaces. Then $\phi$ is \bic\ \ifff\ there is an $\epsilon > 0$ such that whenever $y \in Y$ and $x,\bar x \in \preimage{\phi}{y}$ with $x \neq \bar x$, we have $d(x,\bar x) \ge \epsilon$.
\end{lem}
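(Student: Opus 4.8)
The plan is to prove the two implications separately. The reverse implication (\emph{$\epsilon$-separation of fibers} $\To$ \bic) is essentially immediate from the definitions; the forward implication is a diagonal/compactness argument that manufactures a pair of distinct asymptotic points with the same image, contradicting closingness.

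For the reverse direction, suppose such an $\epsilon > 0$ exists but $\phi$ is not \bic, say $\phi$ is not \rc. Then there are distinct left asymptotic points $x,\bar x \in X$ with $\phi(x) = \phi(\bar x)$. For every $n$ the points $\sigma^{-n}(x)$ and $\sigma^{-n}(\bar x)$ are distinct, lie in the same fiber of $\phi$ (since $\phi$ commutes with $\sigma$), and satisfy $d(\sigma^{-n}(x),\sigma^{-n}(\bar x)) \to 0$, which contradicts the hypothesis as soon as this distance drops below $\epsilon$. The case in which $\phi$ fails to be \lc\ is identical, using right asymptotic points and positive powers of $\sigma$.

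For the forward direction, assume $\phi$ is \bic\ (hence both \rc\ and \lc) and suppose no $\epsilon$ works. Taking $\epsilon = 2^{-m}$ for each $m \in \setN$ yields $y^{(m)} \in Y$ and distinct $x^{(m)},\bar x^{(m)} \in \preimage{\phi}{y^{(m)}}$ that agree on $[-m,m]$ but, being distinct, differ at some coordinate outside it; hence for infinitely many $m$ they differ at a coordinate greater than $m$, or for infinitely many $m$ they differ at a coordinate less than $-m$. Pass to the corresponding subsequence; I treat the first case (the second is symmetric, with \lc\ in place of \rc\ and right asymptotic pairs in place of left asymptotic ones). Let $p_m$ be the least coordinate greater than $m$ at which $x^{(m)}$ and $\bar x^{(m)}$ differ. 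By minimality they agree on $[-m,p_m-1]$, so the shifts $\sigma^{p_m}(x^{(m)})$ and $\sigma^{p_m}(\bar x^{(m)})$ agree on $[-m-p_m,-1]$ while differing at coordinate $0$, and $\phi$ sends both to $\sigma^{p_m}(y^{(m)})$. Using compactness of $X$, pass to a further subsequence along which $\sigma^{p_m}(x^{(m)}) \to u$ and $\sigma^{p_m}(\bar x^{(m)}) \to \bar u$; since the alphabet is finite, along a still further subsequence the $0$-th coordinates $\sigma^{p_m}(x^{(m)})_0$ and $\sigma^{p_m}(\bar x^{(m)})_0$ are each constant, hence unequal. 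Then $u_0 \neq \bar u_0$, so $u \neq \bar u$; $\phi(u) = \phi(\bar u)$ by continuity; and since $[-m-p_m,-1]$ eventually contains any fixed negative integer, $u_i = \bar u_i$ for all $i<0$, so $u$ and $\bar u$ are distinct left asymptotic points collapsed by $\phi$, contradicting \rc ness.

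The verifications that agreement of $x^{(m)},\bar x^{(m)}$ on $[-m,p_m-1]$ transfers to agreement of the shifts on $[-m-p_m,-1]$, and that two points agreeing on all negative coordinates are left asymptotic, are routine bookkeeping with the metric. The one delicate point, which I expect to be the main obstacle, is the extraction of the limit pair: one must shift to the \emph{first} coordinate of disagreement so that the surviving agreement occupies a one-sided interval growing outward (shifting to an arbitrary disagreement would leave only agreement on an interval drifting off to $-\infty$ and lost in the limit), and one must invoke finiteness of the alphabet to keep $u$ and $\bar u$ from colliding at coordinate $0$. An alternative, softer route to the forward direction is to recode $\phi$ to a \bir\ $1$-block code (a \bic\ code admits such a recoding), observe that two distinct points in one fiber must then differ at coordinate $0$ and so are a fixed distance apart, and transfer this lower bound back to $\phi$ using that the recoding conjugacies are uniformly continuous with uniformly continuous inverses.
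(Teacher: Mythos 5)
Your proof is correct. The reverse direction is identical to the paper's. For the forward direction, the paper packages the compactness argument differently: from right closingness it extracts a uniform one-step bound $N_1$ (if $\phi(x)=\phi(\bar x)$ and $x_{[-N_1,0]}=\bar x_{[-N_1,0]}$, then $x_{1}=\bar x_{1}$) and symmetrically an $N_2$ from left closingness, then sets $\epsilon = 2^{-\max\{N_1,N_2\}}$ and observes that two fiber points agreeing on $[-N,N]$ are forced, by iterating the one-step bounds in both directions, to agree everywhere. Your version instead assumes $\epsilon$-separation fails, shifts each offending pair to the first coordinate of disagreement outside the agreement window, and passes to a limit to produce a collapsed asymptotic pair outright. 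Both are compactness arguments and equally valid; the paper's has the advantage of exhibiting the uniform closing delays $N_1, N_2$ explicitly (precisely the quantities that underwrite the bi-resolving recoding you sketch as an alternative route), while yours spells out in full the diagonal extraction that the paper compresses into ``an easy compactness argument.''
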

\begin{proof}
    First, if $\phi$ is not \rc, then there are two left asymptotic points $x$ and $z$ in $X$ such that $x \neq z$ and $\phi(x) = \phi(z) = y \in Y$. Consider $\sigma^{-n}(y)$ for large $n$. Since $d(\sigma^{-n}(x), \sigma^{-n}(z)) \to 0$ as $n \to \infty$, we cannot find an $\epsilon >0$ satisfying the latter condition in the statement of the lemma. A similar argument applies when $\phi$ is not \lc.

    Next, suppose that $\phi$ is \bic. Since $\phi$ is \rc, there is an $N_1 > 0$ with the following property: If $x,y \in X$,  $\phi(x) = \phi(y)$ and $x_{[-N_1,0]}=y_{[-N_1,0]}$, then $x_{1} = y_{1}$ (by an easy compactness argument.) Since $\phi$ is \lc, there is an $N_2 > 0$ with similar property. Let $N = \max \{N_1,N_2 \}.$ It is easy to see that $\epsilon = 2^{-N}$ satisfies the condition in the statement.
\end{proof}

\begin{rem} \label{lem:open_fs_localhomeo}
    Let $\pXtoY$ be an open code between shift spaces. If $\phi$ is \bic, then it is a local homeomorphism. For, let $\epsilon > 0$ be given as in Lemma \ref{lem:fs_iff_bic}. Given $x \in X$, let $U$ be the ball of radius $ \epsilon /2$ centered at $x$. Then $\phi|_U$ is a homeomorphism onto its image.
\end{rem}

\begin{lem} \label{lem:bic_has_degree}
    Let $\phi$ be a \bic\ factor code from a shift space $X$ to an irreducible shift space $Y$. Then there is a $d > 0$ such that $\preimageAbs{\phi}{y} = d$ for each doubly transitive point $y$ of $Y$. Furthermore, $\preimageAbs{\phi}{y} \geq d$ for all $y \in Y$.
\end{lem}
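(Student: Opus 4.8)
The plan is to run the same kind of argument as in Lemma~\ref{lem:open_has_degree}, but with every inequality reversed: where openness there forced fibre cardinalities \emph{down} near a point, here the uniform fibre separation coming from Lemma~\ref{lem:fs_iff_bic} will force them \emph{up}. So I would begin by fixing $\epsilon>0$ as in Lemma~\ref{lem:fs_iff_bic}, so that any two distinct points of a single fibre $\preimage{\phi}{y}$ are at distance at least $\epsilon$. Covering the compact space $X$ by finitely many balls of radius $\epsilon/3$, each of which meets any fibre in at most one point, shows that $\preimageAbs{\phi}{y}$ is finite (as we already know, since $\phi$ is \fto) and, more to the point, bounded above uniformly in $y\in Y$.

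The crucial step is to prove that $y\mapsto\preimageAbs{\phi}{y}$ is upper semicontinuous on $Y$. Given $y\in Y$ with $\preimage{\phi}{y}=\set{x_1,\dots,x_k}$, I would set $U_i=\{x\in X: d(x,x_i)<\epsilon/3\}$; these sets are pairwise disjoint, each has diameter less than $\epsilon$, and their union contains $\preimage{\phi}{y}$. Then $K=X\setminus(U_1\cup\dots\cup U_k)$ is compact and $y\notin\phi(K)$, so there is a neighbourhood $W$ of $y$ disjoint from the compact set $\phi(K)$. For every $y'\in W$ one has $\preimage{\phi}{y'}\subseteq U_1\cup\dots\cup U_k$, and since each $U_i$ has diameter less than $\epsilon$ it can contain at most one point of $\preimage{\phi}{y'}$ by the choice of $\epsilon$; hence $\preimageAbs{\phi}{y'}\le k=\preimageAbs{\phi}{y}$. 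This is the one point where \bic ness is really used, and I expect it to be the heart of the proof: without uniform separation a single $U_i$ could swallow arbitrarily many points of a nearby fibre, and the estimate would collapse.

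Finally I would let $d=\min_{y\in Y}\preimageAbs{\phi}{y}$, which is attained simply because the fibre cardinalities are positive integers bounded below; this already gives $\preimageAbs{\phi}{y}\ge d$ for all $y\in Y$, the second assertion. Since $d$ is the minimum, the set $W_0=\set{y\in Y:\preimageAbs{\phi}{y}=d}$ coincides with $\set{y\in Y:\preimageAbs{\phi}{y}\le d}$, which is open by upper semicontinuity, and it is nonempty. Because $\phi\sigma=\sigma\phi$ and $\sigma$ is a homeomorphism, $\preimageAbs{\phi}{\sigma y}=\preimageAbs{\phi}{y}$ for all $y$. If $y$ is doubly transitive then its orbit is dense in $Y$, so $\sigma^n(y)\in W_0$ for some $n$, whence $\preimageAbs{\phi}{y}=\preimageAbs{\phi}{\sigma^n(y)}=d$. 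The remaining details are routine; the only genuine obstacle is the upper semicontinuity argument of the second paragraph, which is precisely where the bi-closing hypothesis (rather than mere closingness) enters.
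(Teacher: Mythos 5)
Your proof is correct, but it follows a genuinely different route from the paper's. The paper argues directly: fix a doubly transitive point $z$ with $\preimage{\phi}{z}=\{z^{(1)},\dots,z^{(d)}\}$; for an arbitrary $y$ take $\sigma^{n_k}(z)\to y$, pass to a subsequence so that each $\sigma^{n_{k_j}}(z^{(i)})$ converges to some $x^{(i)}$, and observe via Lemma~\ref{lem:fs_iff_bic} that the approximating $d$-tuples are $\epsilon$-separated, hence the limits $x^{(1)},\dots,x^{(d)}$ are $d$ distinct preimages of $y$; equality at doubly transitive points then follows by symmetry. You instead distill the content of the $\epsilon$-separation into a reusable structural fact, upper semicontinuity of $y\mapsto\preimageAbs{\phi}{y}$, and then finish with a soft topological argument: the set where the (attained) minimum value $d$ occurs is open (it equals $\{\preimageAbs{\phi}{y}\le d\}$) and nonempty, fibre cardinality is $\sigma$-invariant, and a doubly transitive orbit must enter any nonempty open set. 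Both proofs rest on the same two ingredients -- the $\epsilon$-separation from Lemma~\ref{lem:fs_iff_bic} and doubly transitive points from irreducibility -- so the difference is organizational rather than in depth. The paper's construction is hands-on and exhibits the $d$ preimages explicitly, which also makes the ``$\ge d$'' bound transparent; your version is cleaner conceptually and isolates upper semicontinuity as a statement of independent interest, dual to the kind of lower bound that openness supplies in Lemma~\ref{lem:open_has_degree}. Your opening sentence about ``reversing inequalities'' is more of a slogan than an accurate description of the mechanism (you are not literally dualizing that proof), but everything after it is sound.
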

\begin{proof}
    Let $z$ be a doubly transitive point of $Y$ and $y \in Y$. Then there is a sequence $\{n_k\}_{k=1}^{\infty}$ such that $\sigma^{n_k} (z) \to y$.
    Write $$\preimage{\phi}{z} = \{ z^{(1)}, \cdots, z^{(d)} \}.$$
    By compactness, there exist subsequence $\{n_{k_j}\} \subset \{n_k\}$ and points $x^{(1)}, \cdots, x^{(d)}$ such that
    $$\sigma^{n_{k_j}} (z^{(i)}) \to x^{(i)} \text{ for each } i = 1, \cdots, d.$$
    Note that $\phi(x^{(i)}) = y$ for all $i$. By Lemma \ref{lem:fs_iff_bic}, $\{\sigma^{n_{k_j}} (z^{(i)})\}_{i=1}^d$ is $\epsilon$-separated for each $j \in \setN$. It follows that $x^{(i)}$'s are distinct and hence $\preimageAbs{\phi}{y} \geq \preimageAbs{\phi}{z} = d$. If $y$ is also doubly transitive, then by the symmetric argument we get $\preimageAbs{\phi}{y} = \preimageAbs{\phi}{z} = d$.
\end{proof}

\begin{cor} \label{cor:open_bic_then_cto}
    Let $\pXtoY$ be an open code between shift spaces. If $\phi$ is \bic\ and $Y$ irreducible, then $\phi$ is \cto.
\end{cor}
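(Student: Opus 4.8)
The plan is to combine the two degree lemmas that were just established, using the fact that bi-closing codes are finite-to-one and the fact that an open code onto an irreducible shift is a factor code. First I would observe that since $\phi$ is \bic, it is \fto\ (closing codes are \fto, as noted in the preliminaries), and since $\phi$ is open with $Y$ irreducible, Lemma \ref{lem:open_then_onto} gives that $\phi$ is onto; hence $\phi$ is a \fto\ factor code.

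Next I would apply Lemma \ref{lem:open_has_degree} to the \fto\ open code $\phi$: there is $d > 0$ with $\preimageAbs{\phi}{z} = d$ for every doubly transitive $z \in Y$, and $\preimageAbs{\phi}{y} \le d$ for all $y \in Y$. Then I would apply Lemma \ref{lem:bic_has_degree} to the \bic\ factor code $\phi$: there is $d' > 0$ with $\preimageAbs{\phi}{z} = d'$ for every doubly transitive $z \in Y$, and $\preimageAbs{\phi}{y} \ge d'$ for all $y \in Y$. Since $Y$ is irreducible it has a doubly transitive point, and evaluating both lemmas at such a point forces $d = d'$.

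Finally, for an arbitrary $y \in Y$ we get $d' = d \le \preimageAbs{\phi}{y}$ from the second inequality and $\preimageAbs{\phi}{y} \le d$ from the first, whence $\preimageAbs{\phi}{y} = d$ for all $y \in Y$; that is, $\phi$ is \cto. There is essentially no obstacle here — the corollary is a direct bookkeeping consequence of the preceding lemmas — the only thing to be careful about is noting that the constant $d$ appearing in the two lemmas is the same, which is immediate because in both it equals the (common) number of preimages of a doubly transitive point of $Y$.
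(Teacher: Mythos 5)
Your proof is correct and takes essentially the same approach as the paper, which simply cites Lemma \ref{lem:open_has_degree} and Lemma \ref{lem:bic_has_degree}; you have spelled out the routine verification that the hypotheses of both lemmas hold (bi-closing implies finite-to-one, open onto irreducible implies surjective) and that the two constants agree by evaluating at a doubly transitive point.
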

\begin{proof}
    This follows from Lemma \ref{lem:open_has_degree} and Lemma \ref{lem:bic_has_degree}.
\end{proof}

\begin{prop}\label{thm:open_cto_bic}
    Let $\pXtoY$ be an open code between shift spaces. If $\phi$ is \cto, then it is \bic.
\end{prop}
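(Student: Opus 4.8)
The plan is to argue by contradiction. I assume $\phi$ is \cto\ but not \bic; since reversing time interchanges \rc\ and \lc, I may assume $\phi$ is not \rc. Write $d := \preimageAbs{\phi}{y}$ for the common fibre size (a \cto\ code is onto, so $d \ge 1$). Because $\phi$ is not \rc, there are distinct left asymptotic points $x \ne \bar x$ in $X$ with $\phi(x) = \phi(\bar x)$; these are the objects from which I will extract a contradiction.

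First I would slide the ``branch point'' off to $-\infty$. Setting $x^{(n)} := \sigma^{-n}(x)$ and $\bar x^{(n)} := \sigma^{-n}(\bar x)$, left asymptoticity says exactly $d(x^{(n)}, \bar x^{(n)}) \to 0$, while $x^{(n)} \ne \bar x^{(n)}$ and $\phi(x^{(n)}) = \phi(\bar x^{(n)}) = \sigma^{-n}(\phi(x))$ for all $n$. Passing to a subsequence $(n_j)$ with $x^{(n_j)} \to w$ for some $w \in X$, compactness gives $\bar x^{(n_j)} \to w$ as well and $\sigma^{-n_j}(\phi(x)) \to z := \phi(w)$.

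The crux is a ``stable fibre'' property forced by openness. I would list $\preimage{\phi}{z} = \{w_1, \dots, w_d\}$ with $w_1 = w$, pick $M$ so large that the central $(2M+1)$ blocks of $w_1, \dots, w_d$ are pairwise distinct, and feed $l = M$ into Lemma \ref{obs_open} to get $k$. Then for any $z' \in Y$ agreeing with $z$ on $[-k,k]$, each $w_i$ (which maps to $z$, hence agrees with $z'$ on $[-k,k]$) can be nudged, via Lemma \ref{obs_open}, to a point $w_i' \in \preimage{\phi}{z'}$ with $(w_i')_{[-M,M]} = (w_i)_{[-M,M]}$. These $d$ points are pairwise distinct, so by the \cto\ property they exhaust $\preimage{\phi}{z'}$; in particular \emph{at most one} point of $\preimage{\phi}{z'}$ can have central $(2M+1)$ block equal to $w_{[-M,M]}$.

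To finish, I would take $j$ large enough that $\sigma^{-n_j}(\phi(x))$ agrees with $z$ on $[-k,k]$ and both $x^{(n_j)}$ and $\bar x^{(n_j)}$ agree with $w$ on $[-M,M]$ --- legitimate since these sequences converge to $z$, $w$, and $w$ respectively. Then $x^{(n_j)}$ and $\bar x^{(n_j)}$ are two points of $\preimage{\phi}{\sigma^{-n_j}(\phi(x))}$ both carrying the central block $w_{[-M,M]}$, so the uniqueness above forces $x^{(n_j)} = \bar x^{(n_j)}$, contradicting $x \ne \bar x$. Hence $\phi$ is \rc; applying the same argument to right asymptotic points and positive powers of $\sigma$ yields \lc, so $\phi$ is \bic. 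The real content is the observation that openness plus \cto ness makes fibres locally rigid under small perturbations of the image point; the only bookkeeping hazard I anticipate is coordinating the three limits so a single index $j$ works for all of them, and that is routine compactness.
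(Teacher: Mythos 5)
Your proof is correct. You argue by contradiction directly from the definition of right closing: shift a pair of collapsed left-asymptotic points so that their point of divergence recedes to $-\infty$, pass to a convergent subsequence to get a limit $w$ with image $z=\phi(w)$, and then use the quantitative form of openness (Lemma~\ref{obs_open}) together with the $d$-to-$1$ property to show that every fibre over a point close to $z$ contains \emph{exactly one} preimage carrying any given central $(2M+1)$-block taken from $\phi^{-1}(z)$. Since both shifted points eventually share that block and lie in the same fibre, they coincide, a contradiction; time reversal handles left closing. All the compactness bookkeeping (a single $j$ working for the three convergences) is routine, as you note, and the claim that the $d$ nudged preimages are pairwise distinct because their central blocks are inherited from the pairwise-distinct blocks of $w_1,\dots,w_d$ is exactly what makes the $d$-to-$1$ hypothesis bite.

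The paper proves the same statement by a related but structurally different compactness argument. It invokes Lemma~\ref{lem:fs_iff_bic} to rephrase failure of bi-closingness as the existence, for every $n$, of a point $y^{(n)}$ with two fibre elements agreeing on $[-n,n]$; it passes to limits $y^{(n)}\to y$ and $x^{n,i}\to x^{(i)}$, observes that the limits occupy at most $d-1$ points so there is a ``spare'' preimage $z\in\phi^{-1}(y)$ distinct from all $x^{(i)}$, chooses a neighborhood $U$ of $z$ missing every $x^{n,i}$, and concludes that $\phi(U)$ contains $y$ but no $y^{(n)}$, so $\phi(U)$ is not open. So the paper exhibits a failure of openness at a cleverly chosen preimage, whereas you use openness positively to build a local unique-lifting (``fibre rigidity'') property and then show the witnesses to non-closingness violate it. Your route avoids Lemma~\ref{lem:fs_iff_bic} altogether, works straight from the definition of closing, and isolates a reusable covering-space-like lemma (fibres over nearby base points are canonically enumerated by central blocks of $\phi^{-1}(z)$); the paper's route is shorter once Lemma~\ref{lem:fs_iff_bic} is in hand and treats both sides of closing simultaneously.
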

\begin{proof}
    Let $\phi$ be \cto\ and $d$ the number of preimages. If $\phi$ is not \bic, then by Lemma \ref{lem:fs_iff_bic} for each $n \in \setN$, there is a point $y^{(n)} \in Y$ such that $\preimage{\phi}{y^{(n)}} = \set{ x^{n,1}, \cdots, x^{n,d}}$ with $x^{n,1}_{[-n,n]} = x^{n,2}_{[-n,n]}$. By choosing a subsequence, we can assume that $y^{(n)} \to y$, ${x^{n,1}} \to x^{(2)}$, and ${x^{n,i}} \to x^{(i)}$ for $i = 2, \cdots, d$.

    Since $\phi$ is $d$-to-1, there is $z \in X$ such that $\phi(z) = y$ and $z \neq x^{(i)}$ for any $i = 2, \cdots, d$. Also there is a neighborhood $U$ of $z$ such that $x^{n,i} \notin U$ for any $i=1, \cdots, d$ and for any $n \in \setN$. Note that $y^{(n)} \to y \in \phi(U)$ but $y^{(n)}$ is not in $\phi(U)$ for any $n$. Hence $\phi(U)$ is not open, which is a contradiction. So $\phi$ is \bic.
\end{proof}

\vspace{0.95cm}

\section{Extension by open codes}\label{sec:section2}

In \cite{Nas}, Nasu considered the \cto\ extensions of \rSFTs\ for the case when the extension is assumed to be of finite type, and obtained the following result.

\begin{prop} \cite{Nas} \label{lem:cto_implies_nonwandering}
    Let $X$ be a \SFT, $Y$ an \rSFT, and $\pXtoY$ a \cto\ code. Then $X$ is nonwandering, all components are maximal, and the restriction of $\phi$ to any component is \cto.
\end{prop}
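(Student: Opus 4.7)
The plan is to pair the entropy bookkeeping available for \fto\ codes against the rigidity of fibers forced by the \cto\ hypothesis, with the Perron--Frobenius property of the \rSofic\ $Y$ (proper subshifts have strictly smaller entropy) as the essential input from the target.

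Since $\phi$ is \cto\ it is onto, and being \fto\ it preserves entropy, so $h(X)=h(Y)$. The variational principle gives $h(\Omega(X))=h(X)=h(Y)$, and $\phi|_{\Omega(X)}$ is still \fto, so its image is a subshift of $Y$ of full entropy; Perron--Frobenius forces $\phi(\Omega(X))=Y$. Writing $\Omega(X)=X_1\sqcup\cdots\sqcup X_k$, each $\phi(X_i)$ is an irreducible subshift of $Y$; if it contains a doubly transitive point of $Y$ then its orbit closure forces $\phi(X_i)=Y$, hence $h(X_i)=h(Y)$ and $X_i$ is maximal, while otherwise $\phi(X_i)\subsetneq Y$ and $h(X_i)<h(Y)$ by Perron--Frobenius.

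The heart of the proof is to show $X=\Omega(X)$ and that no component is non-maximal. I would derive both from a single Hausdorff-continuity principle for fibers: $y_n\to y$ in $Y$ implies $\phi^{-1}(y_n)\to\phi^{-1}(y)$ in the Hausdorff metric. Upper semi-continuity is automatic; the substantive content is that the $d$ preimages do not collapse in the limit, which is precisely the uniform fiber separation of Lemma~\ref{lem:fs_iff_bic}, i.e.\ \bic ness of $\phi$. Granted this, every preimage of a periodic $y\in Y$ is itself periodic (since $\sigma^n$ permutes the $d$-point set $\phi^{-1}(y)$ when $y$ is $n$-periodic) and thus in $\Omega(X)$; Hausdorff continuity together with density of periodic points in $Y$ places every preimage of every $y\in Y$ in the closed set $\Omega(X)$, giving $X=\Omega(X)$. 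A non-maximal $X_j$ is then ruled out analogously: $\phi(X_j)\subsetneq Y$ is a closed shift-invariant subset of the topologically transitive $Y$ and hence not clopen, so one can pick $y\in\phi(X_j)$ and $y_n\notin\phi(X_j)$ with $y_n\to y$; the fibers $\phi^{-1}(y_n)$ lie entirely outside the (now clopen) component $X_j$, so their Hausdorff limit $\phi^{-1}(y)$ misses $X_j$, contradicting $y\in\phi(X_j)$.

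Once $X=\bigsqcup_i X_i$ with each $\phi|_{X_i}\colon X_i\to Y$ a \fto\ factor between \rSFTs, $\phi|_{X_i}$ has a degree $d_i$ with $\abs{\phi^{-1}(y)\cap X_i}=d_i$ for doubly transitive $y\in Y$; constancy of $\abs{\phi^{-1}(\cdot)}$ forces $\sum_i d_i=d$, and one final application of Hausdorff continuity extends $\abs{\phi^{-1}(y)\cap X_i}=d_i$ to all $y\in Y$, making $\phi|_{X_i}$ \cto. The main obstacle is thus the Hausdorff continuity of fibers, equivalently the \bic ness of $\phi$: the entropy and orbit-closure manipulations are routine, but the two density arguments above require this input, and extracting it from the \cto\ hypothesis (for $X$ an arbitrary \SFT, where Nasu's equivalences for the \irr\ SFT case do not apply directly) is the substantive technical step.
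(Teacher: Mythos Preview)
The paper does not give its own proof of this proposition; it is cited from Nasu, whose argument (as the paper remarks just before Lemma~\ref{lem:preimage_of_dtp_isin_nonwanderingset}) proceeds via the combinatorics of compatible and complete sets directly on the SFT $X$.

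Your proposal has a genuine gap, and it is the one you yourself flag: everything hinges on $\phi$ being \bic, and you do not establish this. Within the paper's logic the step is in fact circular. The only route here from \cto\ to \bic\ for a possibly reducible SFT domain is Theorem~\ref{thm:Y_Sofic}, and its proof invokes Proposition~\ref{lem:cto_implies_nonwandering} itself (to decompose the fiber product into irreducible pieces before Theorem~\ref{thm:Nasu} can be applied). Theorem~\ref{thm:Nasu} on its own requires $X$ irreducible, which is exactly what you are trying to reduce to. Your strategy thus inverts the standard order: Nasu first proves nonwandering by direct SFT combinatorics and only then obtains \bic ness on each component via the irreducible case; you want \bic ness first in order to deduce nonwandering, but there is no independent handle on it at that stage. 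Granted \bic ness, your Hausdorff-continuity argument (uniform fiber separation from Lemma~\ref{lem:fs_iff_bic} plus constant fiber size, then density of periodic points in $Y$ and clopenness of components in $\Omega(X)$) is correct and tidy --- but the missing step is precisely where the content of Nasu's proof lies.
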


We will show a similar structural result for the \fto\ open extensions of \rSofic s. First we recall some definitions.

Let $\phi$ be a \fto\ 1-block factor code from an \rSFT\ $X$ to a shift space $Y$. Given a word $w \in \B(Y)$, define
$$d(w) = \min_{1 \leq t \leq |w|} \abs{ \set { a \in \B_1(X) : \exists u \in \phi^{-1}(w) \text{ with } u_t = a } }$$
and $d = \min_{w \in \B(Y)} d(w)$. If a word $w$ satisfies $d(w) = d$, then it is called a \emph{magic word}. In this case, a coordinate $t$ where the minimum occurs is called a \emph{magic coordinate}. For the properties of magic words, see \cite{Kit} or \cite{LM}. A word $v \in \B(X)$ is \emph{intrinsically synchronizing} if whenever $uv$ and $vw$ are in $\B(X)$, we have $uvw \in \B(X)$. Every irreducible sofic shift has an intrinsically synchronizing word.

In \cite{Nas}, Nasu proved the following result for the case of \SFTs\ by using the notions of compatible and complete sets. We generalize this to sofic shifts.

\begin{lem} \label{lem:preimage_of_dtp_isin_nonwanderingset}
    Let $X$ be a \Sofic, $Y$ an \rSofic, and $\pXtoY$ a \fto\ factor code. If $y$ is a doubly transitive point in $Y$, then $\preimage{\phi}{y} \subset \Omega (X).$
\end{lem}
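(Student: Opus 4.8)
The plan is to show that every point in $\preimage{\phi}{y}$ has a doubly transitive-like recurrence structure forcing it into $\Omega(X)$. Fix a doubly transitive point $y \in Y$ and let $x \in \preimage{\phi}{y}$. After recoding, assume $\phi$ is a 1-block code from an edge shift (or more generally handle the sofic domain via its canonical cover $\pi : \X_A \to X$, pulling $x$ back to a point $\tilde x$ in the cover: since $\pi$ is right resolving hence finite-to-one and $\phi \circ \pi$ is a finite-to-one factor code onto $Y$, it suffices to prove the statement for the finite-type domain $\X_A$ and then push forward, as finite-to-one factor codes send nonwandering points to nonwandering points). So reduce to: $X$ an \rSFT\ (edge shift), $\phi$ a \fto\ $1$-block factor code, $y$ doubly transitive in $Y$, and $x \in \preimage{\phi}{y}$; show $x \in \Omega(X)$.

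First I would bring in a magic word $w$ with magic coordinate $t$, so that $d(w) = d$ is the degree of $\phi$ and the ``magic'' property pins down preimages: whenever $w$ occurs in a point of $Y$, the set of letters appearing at the corresponding magic coordinate among all preimages has exactly $d$ elements, and these $d$ choices propagate coherently (this is the standard magic-word / compatible-set machinery of \cite{Kit, LM}). Since $y$ is doubly transitive, $w$ occurs in $y$ infinitely often to the left and to the right; pick occurrences of $w$ at positions tending to $+\infty$ and to $-\infty$. At each such occurrence the point $x$, lying over $y$, realizes one of the $d$ magic letters at the magic coordinate; by the pigeonhole principle one particular magic letter, call it $a$, is realized by $x$ at infinitely many occurrences of $w$ to the right (and similarly, possibly a different letter $a'$, infinitely often to the left). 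Now take two occurrences of $w$ in $y$, one far to the left and one far to the right, at both of which $x$ realizes the \emph{same} magic letter — to arrange this, use again that $y$ is doubly transitive so the block $wuw$ occurs in $y$ (with some connecting word $u$), arbitrarily far out on both sides, and choose the occurrences so that the magic letter of $x$ agrees; a compactness/pigeonhole argument over the finitely many magic letters guarantees this. Between two such matched magic coordinates, the segment of $x$ is a word $v \in \B(X)$ that begins and ends at a ``synchronizing'' configuration determined by the magic letter $a$ (here one uses that matching the letter at a magic coordinate of a magic word forces agreement of the full preimage structure — essentially $v a v' a$-type concatenations stay in $\B(X)$). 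Hence the finite central block $x_{[-m,m]}$ of $x$, for any $m$, sits inside such a $v$, and $v$ can be concatenated with itself through $X$, giving a word of the form $(\text{block of }x)\, w_1\, (\text{block of }x)$ in $\B(X)$. This is exactly the nonwandering condition for the block $x_{[-m,m]}$; letting $m \to \infty$ shows $x \in \Omega(X)$.

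The main obstacle is the middle step: extracting, from a doubly transitive $y$, two occurrences of the magic word — one on each side, arbitrarily far out — at which the given preimage $x$ realizes the \emph{same} magic letter, and then using the magic-word formalism to conclude that the intervening block of $x$ can be closed up into a loop in $\B(X)$. Making the ``same magic letter forces a returnable block'' precise is where one must invoke the full strength of the compatible/complete-set description of preimages over a magic word (as in Nasu's original argument for \SFTs, \cite{Nas}), and the passage from the finite-type cover back down to the sofic shift $X$ needs the observation that $\Omega$ is respected by finite-to-one factor codes — a fact that follows from entropy considerations and the definition of $\Omega$ but should be stated explicitly.
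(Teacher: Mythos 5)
Your reduction to the finite-type case (via the canonical cover of $X$, using that finite-to-one factor codes carry $\Omega$ to $\Omega$) matches the paper exactly. Your framework of magic words is also in the right spirit. But there is a genuine gap in the middle step, and it occurs precisely where the real content of the lemma lies.

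You want to find, for a given $x \in \preimage{\phi}{y}$, two occurrences of the magic word $w$ in $y$ --- one far left, one far right --- at which $x$ realizes the \emph{same} magic letter, and you appeal to a pigeonhole argument to arrange this. But pigeonhole only gives you that one letter $a$ recurs at infinitely many right occurrences and one letter $a'$ recurs at infinitely many left occurrences, as you yourself note; it does \emph{not} give $a = a'$. And indeed, if $x$ were wandering --- left asymptotic to an irreducible component $X_1$ and right asymptotic to a different component $X_2$ --- then the magic letters realized far to the left lie in $\B_1(X_1)$ while those far to the right lie in $\B_1(X_2)$, and these alphabets are disjoint in an edge shift, so no occurrence-pair can have matching letters. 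In other words, the matching you want to ``arrange'' is equivalent to the conclusion you are trying to prove, so the argument is circular at this point. (A secondary issue: the magic-word formalism you cite from [Kit, LM] is developed for an \emph{irreducible} domain, but $X$ here may be reducible; you would need to restrict to components, as the paper does, before the compatible-set machinery applies.)

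The paper avoids this entirely by arguing by contradiction rather than directly exhibiting a return. Supposing $x \notin \Omega(X)$, it looks at the two components $X_1$, $X_2$ to which $x$ is asymptotic, finds a single intrinsically synchronizing word $w$ that is simultaneously a magic word for $\phi|_{X_1}$ and for $\phi|_{X_2}$, uses double transitivity of $y$ to locate occurrences of $w$ straddling the transition of $x$, forms the periodic point $\bar y = (y_{(i_1,j_1]})^\infty$ (the synchronizing property of $w$ ensures $\bar y \in Y$), and splices together the unique $X_1$- and $X_2$-preimages over $\bar y$ with the middle segment of $x$. The resulting $\bar x$ is a \emph{non-periodic} preimage of the periodic point $\bar y$, so $\sigma^{np}(\bar x)$ gives infinitely many preimages of $\bar y$, contradicting that $\phi$ is finite-to-one. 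That contradiction is what replaces the missing ``matching magic letter'' step; your proposal, as written, does not contain an argument that plays this role.
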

\begin{proof}
    First, we prove the case where $X$ is of finite type. Without loss of generality, we can assume $X$ is an edge shift and $\phi$ is 1-block. Suppose there is a point $x$ in $X \setminus \Omega(X)$ with $\phi(x) = y$. Then there are components $X_1$, $X_2$ of $X$ such that $x$ is left asymptotic to $X_1$ and right asymptotic to $X_2$. Hence there are $k, l \in \setZ$, $k < l$, such that $x_i \in \B_1(X_1)$ for all $i \leq k$ and $x_i \in \B_1(X_2)$ for all $i \geq l$. Since $y$ is doubly transitive and $\phi$ is \fto, it follows that $x$ is left transitive in $X_1$ and right transitive in $X_2$. Hence $\phi|_{X_1}$ and $\phi|_{X_2}$ are onto.

    Note that there is an intrinsically synchronizing word $w \in \B(Y)$ which is a magic word for both $\phi|_{X_1}, \phi|_{X_2}$. (Just find three words satisfying each condition and glue them using the irreducibility of $Y$.) Since $y$ is doubly transitive, there are $i_1, i_2, j_1, j_2 \in \setZ$ such that $i_1 < i_2 < k < l < j_1 < j_2$ and $y_{(i_1,i_2]}=y_{(j_1,j_2]} = w$. Let $\bar y = (y_{(i_1,j_1]})^\infty$. Then $\bar y \in Y$, since $w$ is intrinsically synchronizing. Note that $\phi(x_{(i_1,j_2]}) = \bar y_{(i_1,j_2]}$.

    Let $m$ and $n$ denote magic coordinates of $w$ for $\phi|_{X_1}$ and $\phi|_{X_2}$, respectively. Then there is a unique point $x^{(1)} \in X_1$ such that $\phi(x^{(1)}) = \bar y$ and $x^{(1)}_{i_1 + m} = x_{i_1 + m}$. Also, there is a unique point $x^{(2)} \in X_2$ such that $\phi(x^{(2)}) = \bar y$ and $x^{(2)}_{j_1 + n} = x_{j_1 + n}$. Now define $\bar x \in X$ by
    \begin{equation}
        {\bar x}_i =
            \begin{cases}
                x^{(1)}_i & \text{if $i \leq i_1 + m$}   \\
                x_i     & \text{if $i_1 + m \leq i \leq j_1 + n $} \\
                x^{(2)}_i & \text{if $i \geq j_1 + n $}   \\
            \end{cases}
    \end{equation}
    Note that $\bar x$ is well defined and $\phi(\bar x) = \bar y$. Also $\bar x$ is not periodic. Since $\bar y$ is periodic, it follows that $\phi$ is not \fto, which is a contradiction. Thus $\preimage{\phi}{y} \subset \Omega (X)$.

    Next, we prove the case where $X$ is sofic. Let $y$ be a doubly transitive point in $Y$. Take the canonical cover $\pi : \X_A \to X$. By applying the previous result to $\phi \circ \pi$, we get $x \in \Omega(\X_A)$ for all $x \in \pi^{-1} (\preimage{\phi}{y})$. Since the factor code preserves nonwandering points, it follows that $\pi(x) \in \Omega(X)$. Thus $\preimage{\phi}{y} \subset \Omega (X)$.
\end{proof}

\begin{prop} \label{lem:open_implies_nonwandering}
    Let $X$ be a \SFT, $Y$ an \rSofic, and $\pXtoY$ a \fto\ open code. Then $X$ is nonwandering, all components are maximal, and the restriction of $\phi$ to any component is open.
\end{prop}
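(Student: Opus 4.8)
The plan is to reduce both remaining conclusions to the single statement that $X$ is nonwandering, and then to prove that statement by contradiction, playing openness against Lemma~\ref{lem:preimage_of_dtp_isin_nonwanderingset}.

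\emph{Reduction.} First I would observe that the assertions about components follow once we know $X=\Omega(X)$. Indeed, if $X=\Omega(X)$, then since $X$ is a \SFT\ the set $\Omega(X)$ is the disjoint union of its finitely many components $X_1,\dots,X_r$, and each $X_i$, being a subshift, is clopen in $X$. Hence each restriction $\phi|_{X_i}\colon X_i\to Y$ is again an open code, because an open subset of $X_i$ is open in $X$ and so has open $\phi$-image in $Y$. As $Y$ is irreducible, Lemma~\ref{lem:open_then_onto} shows $\phi|_{X_i}$ is onto, and being a restriction of a \fto\ code it is \fto, hence entropy preserving, so $h(X_i)=h(Y)$. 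Since $\phi$ itself is onto (again Lemma~\ref{lem:open_then_onto}) and \fto\ we also get $h(X)=h(Y)$, whence $h(X_i)=h(X)$ and $X_i$ is maximal. This gives the last two conclusions of the proposition.

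\emph{Nonwanderingness.} To prove $X=\Omega(X)$ I would argue by contradiction: suppose there is $x\in X\setminus\Omega(X)$. Since $\Omega(X)$ is a subshift, hence closed, I can choose a central cylinder $C$ with $x\in C\subseteq X\setminus\Omega(X)$; in particular no point of $C$ lies in $\Omega(X)$. Because $\phi$ is open, $\phi(C)$ is an open neighbourhood of $\phi(x)$, so it contains a central cylinder ${}_{-k}[v]$ with $v=\phi(x)_{[-k,k]}$. Since $Y$ is irreducible it has a doubly transitive point, and translating it appropriately I obtain a doubly transitive point $y'\in {}_{-k}[v]\subseteq\phi(C)$. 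Pick $x'\in C$ with $\phi(x')=y'$. Then $x'\notin\Omega(X)$ because $x'\in C$; on the other hand, applying Lemma~\ref{lem:preimage_of_dtp_isin_nonwanderingset} to the \fto\ factor code $\phi$ and the doubly transitive point $y'$ gives $x'\in\preimage{\phi}{y'}\subseteq\Omega(X)$ --- a contradiction. Hence $X=\Omega(X)$, which completes the argument together with the reduction above.

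\emph{Where the difficulty lies.} The one step carrying the idea is the middle of the previous paragraph: openness converts the useless information that ``$\phi(x)$ is \emph{some} point of $Y$'' into the statement that an entire cylinder neighbourhood of $\phi(x)$ is realized by preimages lying inside the wandering cylinder $C$, after which irreducibility of $Y$ produces a doubly transitive point in that neighbourhood whose preimages Lemma~\ref{lem:preimage_of_dtp_isin_nonwanderingset} forces to be nonwandering. Everything else --- the clopen decomposition of $\Omega(X)$ for a \SFT, the entropy bookkeeping, and the passage from an open set to a central subcylinder --- is routine, and I do not expect to need any recoding of $\phi$ to a $1$-block code, since all the facts used ($\Omega(X)$ being a closed subshift, the component decomposition of $\Omega$ of a \SFT, and the existence and shift-invariance of doubly transitive points of an irreducible shift) are available exactly as stated.
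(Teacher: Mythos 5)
Your proof is correct, and for the crucial nonwanderingness step you take a genuinely different and cleaner route than the paper. The paper's proof, working with an edge shift, first shows that $\phi$ restricted to each sink and each source component of $\G_A$ is onto, via a cylinder argument: if some word $v$ were missing from $\phi(\X_{A_i})$, one could build a point in a cylinder containing a transition edge into (or out of) $\X_{A_i}$ whose image contains $v$ in the "forbidden" half-line, contradicting openness of $\phi$ on that cylinder. With ontoness on the extreme components in hand, the paper then supposes $\Omega(\X_A)\neq\X_A$ and constructs a wandering point $x$ that is left transitive in a source component and right transitive in a sink component; the ontoness just proved forces $\phi(x)$ to be doubly transitive, and Lemma~\ref{lem:preimage_of_dtp_isin_nonwanderingset} gives the contradiction. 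You collapse all of this into one step: openness sends the wandering cylinder $C$ to an open set, which therefore contains a cylinder, which therefore (by density of doubly transitive points in the irreducible shift $Y$) contains a doubly transitive point $y'$, and any $\phi$-preimage $x'\in C$ of $y'$ contradicts Lemma~\ref{lem:preimage_of_dtp_isin_nonwanderingset} directly. Both proofs hinge on that lemma, and both finish the component/maximality bookkeeping the same way (clopen components, Lemma~\ref{lem:open_then_onto}, entropy preservation of \fto\ factors); but your version avoids the sink/source analysis entirely and does not require recoding $\phi$ to a $1$-block code. One small point worth making explicit: the application of Lemma~\ref{lem:preimage_of_dtp_isin_nonwanderingset} needs $\phi$ to be a factor code, which you should record as following from Lemma~\ref{lem:open_then_onto} before, not only inside, the reduction step.
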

\begin{proof}
    We can assume $X =  \X_A$ is an edge shift and $\phi$ is a 1-block code. Then we can write $\Omega(X) = \dot \bigcup ~ \X_{A_i}$, where $\X_{A_i}$'s are components of $\X_A$.

    Let $\X_{A_i}$ be a sink component. We claim that $\phi|_{\X_{A_i}}$ is onto. If not, then there is a word $v \in \B(Y) \backslash \B(\phi(\X_{A_i}))$. If there is no transition edge to $\X_{A_i}$, then $\X_{A_i}$ is open in $\X_A$. But $\phi(\X_{A_i})$ is a proper closed $\sigma$-invariant subset of $Y$ and it is not open in $Y$ since $Y$ is topologically transitive, thus we get a contradiction. So there is an edge $e \in \G_A$ whose terminal vertex lies in $\G_{A_i}$, but $e \notin \G_{A_i}$. Consider $\phi([e])$. Choose $l \geq 0$ so that $\phi([e])$ is a union of central $2l+1$ cylinders, one of which is $_{-l}[u]$.

    Since $Y$ is irreducible, there is a word $w$ with $u w v \in \B(Y)$. Take a point $y \in Y$ with $y_{[-l,l+|w|+|v|]} = u w v$. Then $y \in {}_{-l}[u]$. But if $x \in \B(\X_A)$ with $x_0 = e$, then $x_k \in \B(\X_{A_i})$ for all $k > 0$ and hence $v$ cannot occur in $\phi(x)_{[1,\infty)}$. It follows that $y \notin \phi([e])$, which is a contradiction. Hence $\phi|_{\X_{A_i}}$ is onto for each sink component. Similarly $\phi|_{\X_{A_j}}$ is onto for each source component $\X_{A_j}$.

    Suppose that $\Omega(\X_A) \neq \X_A$. Then there is a point $x \in \X_A \backslash \Omega(\X_A)$ such that $x$ is left asymptotic to some source component and right asymptotic to some sink component. Furthermore, we can assume that $x$ is left transitive in the source component, and right transitive in the sink component. Then $\phi(x)$ is a doubly transitive point in $Y$, which contradicts Lemma \ref{lem:preimage_of_dtp_isin_nonwanderingset}. Hence $\X_A$ is nonwandering. Now, since each component $\X_{A_i}$ is open and closed, the restriction $\phi|_{\X_{A_i}}$ must be open, hence onto by Lemma \ref{lem:open_then_onto}. Consequently, all components are maximal.
\end{proof}

We are now ready to prove the main part of Theorem \ref{thm:Main1}. The heart of the proof lies in the next two propositions. For a $0$-$1$ matrix $A$, denote $\hat \X_A$ by the shift space consisting of $(x_i)_{i \in \setZ}$ with $A_{x_i x_{i+1}} = 1$. There is a natural conjugacy $\pi : \X_A \to \hat \X_A$ \cite{LM}.

\begin{prop} \label{lem:rc_open_then_nonwandering}
    Let $\pStoY$ be a \rc\ open code from a shift space $\Sigma$ to an \rSFT\ $Y$. Then $\Sigma$ is a nonwandering \SFT.
\end{prop}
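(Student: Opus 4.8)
The plan is to show that a right closing open code forces the domain $\Sigma$ to be a shift of finite type, and then to invoke the structural results already developed to conclude that it is nonwandering.

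First I would reduce to a convenient normal form: recode so that $\phi$ is a 1-block code and right resolving (possible since $\phi$ is right closing). The main task is to produce a finite list of forbidden words witnessing that $\Sigma$ is an SFT. To do this I would try to mimic the argument of Proposition \ref{prop:open_sft_then_sft}, but now running in the domain rather than the image: given $l$, choose $k$ from the openness criterion in Lemma \ref{obs_open} so that the image of a central $(2l+1)$-cylinder in $\Sigma$ is a union of central $(2k+1)$-cylinders in $Y$; we will check that $\Sigma$ is an $M$-step SFT for an $M$ determined by $l$, $k$, and the memory/anticipation from the right-resolving recoding. Concretely, suppose $u w$ and $w v$ are words in $\B(\Sigma)$ with $|w|$ large (say $|w| \geq 2l+2k+1$ or so). Pick points $x, x' \in \Sigma$ realizing these overlaps. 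In $Y$ we have $\phi(x)$ and $\phi(x')$ agreeing on the block coming from $w$; using that $Y$ is an SFT (so $\phi(x)$ and $\phi(x')$ can be spliced along the agreement to give a point $\bar y \in Y$) together with the openness of $\phi$, we can find $\bar x \in \Sigma$ with $\phi(\bar x) = \bar y$ and $\bar x$ agreeing with $x$ on a large central block around the $u$-part. Then right closingness (right resolving) propagates this agreement forward along the $w$-part, forcing $\bar x$ to also carry $v$ after it — so $u w v \in \B(\Sigma)$, and $\Sigma$ is SFT.

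Once $\Sigma$ is known to be an SFT, I would apply Lemma \ref{lem:open_then_onto} to get that $\phi$ is onto (so $\phi$ is a factor code), and then Proposition \ref{lem:open_implies_nonwandering} applies directly: $\Sigma$ is a finite-to-one (since right closing) open code onto the \rSofic\ $Y$, hence $\Sigma$ is nonwandering. This gives both conclusions of the proposition.

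The step I expect to be the main obstacle is the splicing argument in the domain: in Proposition \ref{prop:open_sft_then_sft} one builds $\bar x$ by hand as a concatenation $x_i$ for $i \leq 0$ and $z_i$ for $i \geq 0$, using that $X$ is already an SFT — here we do not yet know $\Sigma$ is an SFT, so we cannot freely concatenate points of $\Sigma$. The fix is to do the concatenation on the image side (where $Y$ \emph{is} an SFT, giving us $\bar y$), pull back via openness to get $\bar x$ with a prescribed central block, and then transport the needed agreement \emph{forward} from that central block using right resolvingness — this is exactly where right closingness, rather than mere openness, is essential, and getting the bookkeeping of overlap lengths right (so that the right-resolving propagation reaches past the $w$-block into the $v$-block) is the delicate part.
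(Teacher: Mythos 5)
Your approach is correct but takes a genuinely different route from the paper's. The paper does not try to verify an $M$-step condition on $\Sigma$ directly. Instead, after recoding so that $\phi$ is $1$-block right resolving and $Y$ is an edge shift, it embeds $\Sigma$ in the SFT $\hat\X_A$ whose allowed transitions are exactly $\B_2(\Sigma)$, extends $\phi$ to a right resolving code $\bar\phi : \hat\X_A \to Y$ (citing \cite{BMT}), uses the existence of $\bar\phi$ to conclude that every maximal irreducible component of $\X_A$ is a sink (citing \cite{Kit}), and then uses openness of $\phi$ through cylinder-image arguments (as in Proposition~\ref{lem:open_implies_nonwandering}) to force $\Sigma$ to be exactly the disjoint union of the maximal components, hence a nonwandering SFT. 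Your route --- splice in $Y$ using that $Y$ is SFT, pull a preimage back by openness, push the agreement forward by right resolving, conclude the overlapping-word condition for $\Sigma$, and only then invoke Lemma~\ref{lem:open_then_onto} and Proposition~\ref{lem:open_implies_nonwandering} --- is more elementary and self-contained in that it avoids the auxiliary SFT cover and the external citations. The paper's construction has the compensating advantage that the same scaffolding ($\X_A$, $\bar\phi$, the decomposition into maximal components plus $\X_{\bar A}$) is reused almost verbatim in Proposition~\ref{lem:rc_cto_then_nonwandering}, where openness is replaced by constant-to-one and your splice-then-pull-back step is unavailable.

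One point to tighten: you fix $l$, obtain $k$ from Lemma~\ref{obs_open}, and take $|w|$ large, but with $l$ fixed openness only prescribes a central $(2l+1)$-block of $\bar x$, so you can force $\bar x$ to reproduce the whole $u$-part only when $|u| \le 2l+1$. This is not a real obstruction: to show $\Sigma$ is $M$-step it suffices to handle $|u| = |v| = 1$ (the general case follows by the standard induction on word length using two overlapping $(n-1)$-windows), and then even $l = 0$ works, with $M$ any integer at least $\max\{k, M_Y\}$ where $M_Y$ is the step of $Y$. But as written, ``agreeing with $x$ on a large central block around the $u$-part'' reads as though the openness constant were uniform in $|u|$, which it is not, so the reduction to $|u|=|v|=1$ should be stated explicitly.
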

\begin{proof}
    By recoding, we can assume that $Y$ is an edge shift and $\phi$ is a 1-block \rr\ code. Define a 0-1 matrix $A$, indexed by symbols of $\Sigma$, by $A_{ij} = 1$ if  $ij \in \B(\Sigma)$ and $A_{ij}=0$ otherwise. We can extend $\phi$ to a \rr\ code $\bar \phi : \hat \X_A \to Y$ by letting $\bar \phi (x)_i = \phi(x_i)$ for $i \in \setZ$. Since $Y$ is an edge shift, $\bar \phi$ is well defined \cite[Theorem 4.12]{BMT}. The existence of $\bar \phi$ implies that every maximal component of $\hat \X_A$, hence of $\X_A$, is a sink component \cite[Lemma 5.1.4]{Kit}. By precomposing the conjugacy $\pi : \X_A \to \hat \X_A$, we can assume that $\bar \phi$ is a code from $\X_A$ to $Y$ which is an extension of $\phi$.

    Let $\X_{A_i}$ be a maximal component of $\X_{A}$. If $\Sigma \cap \X_{A_i} \subsetneq \X_{A_i}$, then take words $v \in \B(Y) \setminus  \B(\phi(\X_{A_i} \cap \Sigma))$ and $e \in \B_1(\X_{A_i})$.
    As in the proof of Proposition \ref{lem:open_implies_nonwandering}, $\phi([e])$ is a union of $2l+1$ cylinders, one of which is ${}_{-l}[u]$. Take a point $y \in Y$ with $y_{[-l,l+|w|+|v|]} = u w v$ for some $w \in \B(Y)$. Then $y \in {}_{-l}[u]$, but it cannot be in $\phi([e])$, hence we get a contradiction. Thus $\Sigma \cap \X_{A_i} = \X_{A_i}$ for each maximal component.

    Consider the subgraph $\G_{\bar A}$ of $\G_A$ consisting of all nonmaximal components of $\G_A$ and transition edges between these components. If $\X_{\bar A}$ is nonempty, then since every component of $\G_{\bar A}$ is not maximal, $\phi(\X_{\bar A})$ is not equal to $Y$. Take words $v \in \B(Y) \setminus  \B(\phi(\X_{\bar A}))$ and $e \in \B_1(\X_{\bar A})$. As above, $\phi([e])$ does not contain a point of the form $\cdots v \cdots . \phi(e) \cdots$, so $\phi$ cannot be open. Hence $\X_{\bar A}$ is empty and each maximal component is also a source component. Thus, $\Sigma = \dot \bigcup ~ \X_{A_i}$ is a nonwandering shift of finite type.
\end{proof}

\begin{prop} \label{lem:rc_cto_then_nonwandering}
    Let $\pStoY$ be a \rc\ \cto\ code from a shift space $\Sigma$ to an \rSFT\ $Y$. Then $\Sigma$ is a nonwandering \SFT.
\end{prop}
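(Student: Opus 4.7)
The approach follows the proof of Proposition~\ref{lem:rc_open_then_nonwandering}, with constant-to-oneness in place of openness, and a sharp counting argument for the key step. By recoding, assume $Y$ is an edge shift and $\phi$ is a 1-block right resolving factor code. Define the $0$-$1$ matrix $A$ on $\B_1(\Sigma)$ by $A_{ij} = 1 \Leftrightarrow ij \in \B_2(\Sigma)$, so $\Sigma \subseteq \hat\X_A$, and extend $\phi$ to a right resolving factor code $\bar\phi : \X_A \to Y$ exactly as in the previous proposition; every maximal component of $\X_A$ is then a sink. Since $\phi$ is finite-to-one, $h(\Sigma) = h(Y)$; since $\bar\phi$ is right resolving (hence finite-to-one), $h(\X_A) = h(\bar\phi(\X_A)) \leq h(Y)$. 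Combined with $\Sigma \subseteq \X_A$, this forces $h(\X_A) = h(Y)$, and Perron-Frobenius applied to the irreducible $Y$ gives $\bar\phi(\X_A) = Y$ and $\bar\phi(\X_{A_i}) = Y$ for each maximal component $\X_{A_i}$. Each $\bar\phi|_{\X_{A_i}}$ is then a right resolving factor code from an irreducible \SFT\ onto $Y$, with a well-defined degree $d_i$.

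Call a maximal component $\X_{A_i}$ \emph{good} if $\X_{A_i} \subseteq \Sigma$, \emph{empty} if $\X_{A_i} \cap \Sigma = \emptyset$, and \emph{intermediate} otherwise. The plan is to rule out intermediate components and transient points of $\Sigma$ by a counting argument. For doubly transitive $y' \in Y$, Lemma~\ref{lem:preimage_of_dtp_isin_nonwanderingset} applied to $\bar\phi$ gives $\phi^{-1}(y') \subseteq \bigsqcup_j (\X_{A_j} \cap \Sigma)$. If $\X_{A_j}$ is intermediate, then $\X_{A_j} \cap \Sigma$ is a proper subshift of the irreducible \SFT\ $\X_{A_j}$, so by Perron-Frobenius $\phi(\X_{A_j} \cap \Sigma) \subsetneq Y$ and $y' \notin \phi(\X_{A_j} \cap \Sigma)$; intermediates therefore contribute nothing to $\phi^{-1}(y')$. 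Hence $d = |\phi^{-1}(y')| = \sum_{j \text{ good}} d_j$. If some $\X_{A_i}$ were intermediate, pick $y \in \phi(\X_{A_i} \cap \Sigma)$, which is not doubly transitive: the good components already contribute $\sum_{j \text{ good}} |\bar\phi|_{\X_{A_j}}^{-1}(y)| \geq \sum_{j \text{ good}} d_j = d$ preimages of $y$ (by the degree bound for finite-to-one factor codes from irreducible \SFTs\ stated after Lemma~\ref{lem:open_has_degree}), while $\X_{A_i} \cap \Sigma$ contributes at least one more, forcing $|\phi^{-1}(y)| \geq d+1$ and contradicting constant-to-oneness. An entirely analogous count rules out transients: a point $x \in \Sigma$ lying outside $\Omega(\X_A) = \bigsqcup_j \X_{A_j}$ has $\phi(x)$ non-doubly-transitive (by Lemma~\ref{lem:preimage_of_dtp_isin_nonwanderingset} applied to $\bar\phi$), yet $x$ would provide a $(d+1)$-st preimage of $\phi(x)$ on top of the $d$ already furnished by the good components, again a contradiction.

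Therefore $\Sigma = \bigsqcup_{i \text{ good}} \X_{A_i}$, a disjoint union of irreducible \SFTs, and hence a nonwandering \SFT. The main technical content is packaged into the classification of maximal components and the two counting arguments, each resting on the degree bound $|\bar\phi|_{\X_{A_j}}^{-1}(y)| \geq d_j$ for right resolving factor codes from irreducible \SFTs, combined with the constant-to-one count on $\Sigma$; no serious obstacle remains once this dichotomy is set up, and in particular the need for a magic-word gluing in the style of Lemma~\ref{lem:preimage_of_dtp_isin_nonwanderingset} is avoided by noting that the sink property of maximal components already forces intermediate components and transients to overshoot the degree $d$.
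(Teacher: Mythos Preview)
Your strategy is the same as the paper's: build the right resolving extension $\bar\phi:\X_A\to Y$, show its maximal components are sinks and map onto $Y$, compute $d$ as the sum of the degrees over those maximal components that lie entirely inside $\Sigma$, and then derive an overshoot $|\phi^{-1}(y)|\ge d+1$ from any leftover point of $\Sigma$. The paper packages the leftover as $\X_{\bar A}\cup\mathcal S$ (nonmaximal components plus transients) and runs one combined count; your good/intermediate/empty trichotomy is a clean repackaging of the same idea.

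There is, however, a genuine gap: you only classify the \emph{maximal} components of $\X_A$ and then write $\Omega(\X_A)=\bigsqcup_j \X_{A_j}$. That equality is false in general --- $\X_A$ may well have nonmaximal irreducible components (the paper's $\X_{\bar A}$), and $\Omega(\X_A)$ is the union of \emph{all} components. This bites in two places. First, Lemma~\ref{lem:preimage_of_dtp_isin_nonwanderingset} only gives $\phi^{-1}(y')\subset\Omega(\X_A)\cap\Sigma$, so before concluding $d=\sum_{j\text{ good}}d_j$ you must also exclude nonmaximal components; this is immediate by your own entropy argument, since such a component has entropy strictly below $h(Y)$ and its $\bar\phi$-image is a proper subshift of $Y$ missing every doubly transitive point. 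Second, your final step rules out intermediate maximal components and transients, but never rules out points of $\Sigma$ lying in a nonmaximal component. The same overshoot count works here too: any such point is disjoint from the good components, and the good components already supply $\ge d$ preimages of its image. Once you insert these two remarks (equivalently, replace ``transients'' by ``points outside the good maximal components''), the proof is complete and coincides with the paper's.
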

\begin{proof}
    By proceeding as in the proof of Proposition \ref{lem:rc_open_then_nonwandering}, we obtain a \SFT\ $\X_A$ and $\bar \phi : \X_A \to Y$ where each maximal component of $\X_A$ is a sink and $\bar \phi$ is a right resolving extension code of $\phi$.

    Write $$\X_A =  (\dot \bigcup ~ \X_{A_i}  ) ~ \dot \cup ~ \X_{\bar A} ~\dot \cup ~\setS$$
    where $\X_{\bar A}$ is described as in the proof of Proposition \ref{lem:rc_open_then_nonwandering} and $\setS$ is the set of all points which are left asymptotic to $\X_{\bar A}$ and right asymptotic to some maximal component $\X_{A_i}$. Also write
    $$\Sigma = ( \dot \bigcup ~ \Sigma_i ) ~  \dot \cup ~ \tilde \Sigma ~ \dot \cup ~ \tilde \setS$$ where $\Sigma_i = \Sigma \cap \X_{A_i} , \tilde \Sigma = \Sigma \cap \X_{\bar A}$ and $\tilde \setS = \Sigma \cap \setS$. Let $p$ be the number of maximal components in $\X_A$.

    Since $\phi$ is onto, there is at least one $i$ satisfying $\Sigma_i = \X_{A_i}$. For, if there is no such $i$, then since $\tilde \setS \subset \Sigma \setminus \Omega(\Sigma)$ and the topological entropy is concentrated on the nonwandering set, we have $$h(\Sigma) = \max \{h(\Sigma_1), \cdots, h(\Sigma_p), h(\tilde \Sigma) \} < h(\X_A),$$ which is a contradiction.
    Without loss of generality, we can assume the set of such $i$'s are $\{1, 2, \cdots, q\}$ with $q \leq p$.
    Since $h(\X_{\bar A}) < h(Y)$, $\bar \phi|_{\X_{\bar A}}$ is not onto. Hence there exists a doubly transitive point $y$ in $Y \setminus \bar \phi(\X_{\bar A})$. Since $\preimage{\bar \phi}{y} \subset \Omega(\X_A)$ by Lemma \ref{lem:preimage_of_dtp_isin_nonwanderingset}, it follows that $\bar \phi(\X_A \setminus \bigcup_{i=1}^p \X_{A_i}) \neq Y$ and hence $\phi(\Sigma \setminus \bigcup_{i=1}^p \Sigma_i) \neq Y$. Since $h(\Sigma_i) < h(Y)$ for each $i=q+1, \cdots, p$, it follows that $\preimage{\phi}{y} \cap \Sigma_i = \emptyset$. Thus we get $\phi(\Sigma \setminus \bigcup_{i=1}^q \Sigma_i) \neq X$.

    For each $i = 1, \cdots, q$, $\phi|_{\Sigma_i}$ is a \fto\ factor code between \rSFTs. Let $d_i$ be the degree for $\phi|_{\Sigma_i}$ and $d = \sum_{i=1}^q d_i$. If $\Sigma \setminus \bigcup_{i=1}^{q} \Sigma_i \neq \emptyset$, then take a doubly transitive point $z$ in $Y \setminus \phi(\bigcup_{i>q} \Sigma_i \cup \tilde \Sigma)$. By Lemma \ref{lem:preimage_of_dtp_isin_nonwanderingset}, we get $\preimage{\bar \phi}{z} \subset \Omega(\X_A)$, hence $\preimage{\phi}{z} \subset ( \bigcup_{i=1}^{p} \Sigma_i) \cup \tilde \Sigma $. By the condition on $z$, $\preimage{\phi}{z}$ lies in $\bigcup_{i=1}^{q} \Sigma_i$, so $\preimageAbs{\phi}{z} = d$ and $\phi$ is $d$-to-$1$ everywhere. But if we take $x \in \phi(\Sigma \setminus \bigcup_{i=1}^{q} \Sigma_i)$, then by letting $\phi_i = \phi|_{\Sigma_i}$ and $\tilde \phi = \phi|_{\Sigma \setminus \bigcup_{i=1}^{q} \Sigma_i}$, it follows that
    $$\preimageAbs{\phi}{x} = \sum_{i=1}^q | {\phi_i}^{-1} (x) | + | {\tilde \phi}^{-1} (x) | \geq d + 1,$$
    which is a contradiction. Hence $\Sigma \setminus \bigcup_{i=1}^{q} \Sigma_i = \emptyset$.
    Thus $$\Sigma = \bigcup_{i=1}^{q} \Sigma_i = \bigcup_{i=1}^{q} \X_{A_i}$$ so that $\Sigma$ is a nonwandering \SFT.
\end{proof}

\begin{rem}
    If $\phi$ is assumed to be \bic\ in the above propositions, then the proofs can be organized to be simpler. First, in the proof of Proposition \ref{lem:rc_open_then_nonwandering}, if we assume $\phi$ is \bir\ by recoding, then $\bar \phi$ is \bir. Hence each maximal component $\X_{A_i}$ is a sink and a source, so an open and closed set in $\X_A$. Hence $\X_A = (\dot \bigcup \X_{A_i}) \dot \cup \X_{\bar A}$. Since $\phi(\Sigma \cap \X_{A_i})$ is open, we get $\Sigma \cap \X_{A_i} = \X_{A_i}$ for each $i$. Since $\phi(\Sigma \cap \X_{\bar A})$ is open, we get $\X_{\bar A} = \emptyset$, so that $\Sigma = \bigcup \X_{A_i}$.

    Next, in the proof of Proposition \ref{lem:rc_cto_then_nonwandering}, we have a representation $\X_A = (\dot \bigcup_{i=1}^p \X_{A_i}) \dot \cup \X_{\bar A}$, where $\X_{A_i}$'s are maximal components and $h(\X_{\bar A}) < h(X)$. Let $\Sigma = (\dot \bigcup_{i=1}^p \Sigma_i) \dot \cup \tilde \Sigma$ as in the proof. Since $\phi$ is onto, $\Sigma_i = \X_{A_i}$ for some $i$. By reordering, we can assume $i=1$. Now take $\Sigma^{(1)} = (\bigcup_{i=2}^p \Sigma_i) \cup \tilde \Sigma$. If $\Sigma^{(1)}$ is empty, we are done. Otherwise, $\phi|_{\Sigma_1}$ is \cto, since it is a \bic\ code between two \rSFTs. It follows that $\phi|_{\Sigma^{(1)}}$ is also \cto, so it is onto and $\Sigma_i = \X_{A_i}$ for some $i \geq 2$, which we can assume to be $2$. Now consider $\Sigma^{(2)}= (\bigcup_{i=3}^p \Sigma_i) \cup \tilde \Sigma$, and so on. By processing, we terminate after at most $p$ steps and get $\Sigma = \bigcup_{i=1}^q \X_{A_i}$ for some $q \leq p$.
\end{rem}

\vspace{1cm}

\section{Main Theorems}\label{sec:main}

As we have seen, there are relations among open, \cto, and \bic\ codes. Some of such relations have been considered in \cite{CP,Hed,Nas}. Especially, in the category of \rSFTs, they are all equivalent. As we shall see, in the case of general shift spaces, any two of the above properties imply the third. When the range is sofic, the code forces the domain to be sofic and nonwandering. It turns out that the existence of a cross section of a code is useful.

We first give the well-known result of Nasu.

\begin{thm} \cite{Nas} \label{thm:Nasu}
    Let $X$ and $Y$ be \rSFTs\ \wee\ and $\pXtoY$ a factor code. Then \TFAE.
        \begin{enumerate}
            \item $\phi$ is open.
            \item $\phi$ is \cto.
            \item $\phi$ is \bic.
        \end{enumerate}
\end{thm}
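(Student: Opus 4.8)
The plan is to run the cycle of implications $(1)\To(2)\To(3)\To(1)$, using the results of Section~\ref{sec:openness}, Nasu's Proposition~\ref{lem:cto_implies_nonwandering}, and one classical fact of the resolving theory of \SFTs. Observe first that a factor code between \rSFTs\ \oee\ is automatically \fto, since an infinite-to-one factor code from an \rSFT\ strictly lowers entropy; so $\phi$ is \fto. Given this, $(1)\To(2)$ is immediate: if $\phi$ is open, Lemma~\ref{lem:open_has_degree} supplies a degree $d$ with $\preimageAbs{\phi}{y}\le d$ for all $y\in Y$, while the fact that $X$ is an \rSFT\ and $\phi$ is \fto\ gives $\preimageAbs{\phi}{y}\ge d$ for all $y$; hence $\phi$ is $d$-to-$1$, i.e.\ \cto.

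For $(2)\To(3)$ I would bring in the fiber product. Let $(\Sigma,\psi_1,\psi_2)$ be the fiber product of $\phi$ with itself; since $X$ and $Y$ are \SFTs\ so is $\Sigma$, and the diagonal $\Delta=\set{(x,x):x\in X}$ is an irreducible subshift of $\Sigma$ conjugate to $X$. One checks directly that $\psi_1$ is a \cto\ factor code onto $X$ (its fiber over $x$ is $\set{x}\times\preimage{\phi}{\phi(x)}$). Then Proposition~\ref{lem:cto_implies_nonwandering} forces $\Sigma$ to be nonwandering, hence a disjoint union of clopen irreducible components; since $\psi_1$ is \fto, the component containing $\Delta$ has entropy at most $h(X)=h(\Delta)$, so $\Delta$ must be that component and in particular is clopen in $\Sigma$. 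It remains to observe that $\phi$ is \rc\ \ifff\ $\Delta$ is a sink component of $\Sigma$ and \lc\ \ifff\ $\Delta$ is a source component: if $\phi$ collapses left-asymptotic points $x\ne\bar x$, then $(x,\bar x)\in\Sigma\setminus\Delta$ is left-asymptotic to $(x,x)\in\Delta$, and the converse is equally direct. As $\Delta$ is clopen it is both a sink and a source, whence $\phi$ is \bic.

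For $(3)\To(1)$ I would recode so that $X=\X_A$ and $Y=\X_B$ are edge shifts and $\phi$ is a $1$-block \bir\ code, and use that a \rr\ onto code between \rSFTs\ is \emph{right covering}: at every vertex $v$ of $\G_A$ the outgoing edges of $v$ map bijectively onto the outgoing edges of the image vertex. This is the standard Perron subinvariance argument: pulling the Perron eigenvector of $B$ back along the vertex map gives a positive vector $w'$ on the vertices of $\G_A$ with $Aw'\le\lambda w'$, and $h(\X_A)=h(\X_B)$ forces equality, hence surjectivity on outgoing edges. Symmetrically $\phi$ is left covering. Then openness holds with $k=l$ in Lemma~\ref{obs_open}: given $x\in X$ and $y\in Y$ with $\phi(x)_{[-l,l]}=y_{[-l,l]}$, set $\bar x_{[-l,l]}=x_{[-l,l]}$ and extend uniquely to the right of coordinate $l$ using right covering (the labels $y_{l+1},y_{l+2},\dots$ are realized by unique outgoing edges) and to the left of coordinate $-l$ using left covering; the resulting $\bar x$ lies in $X$, satisfies $\phi(\bar x)=y$, and agrees with $x$ on $[-l,l]$.

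I expect the real work to lie in the two ingredients imported from outside Section~\ref{sec:openness}: recognizing that ``$\phi$ is \bic'' is exactly ``the diagonal is a clopen component of the self-fiber-product'', which is what lets Nasu's Proposition~\ref{lem:cto_implies_nonwandering} do the job in $(2)\To(3)$, and the classical ``resolving onto implies covering'' fact used in $(3)\To(1)$. Once these are at hand, each of the three implications is a short assembly of tools already available, and the cycle yields all the equivalences.
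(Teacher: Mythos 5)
Your proof follows the same cycle $(1)\To(2)\To(3)\To(1)$ as the paper, and the steps $(1)\To(2)$ (via the remark following Lemma~\ref{lem:open_has_degree}) and $(3)\To(1)$ (recode to \bir\ and invoke the Perron-theory fact that a resolving onto code between irreducible graphs with the same Perron value is a covering, which is \cite[Theorem 8.2.2]{LM}) coincide with what the paper does. The genuinely different step is $(2)\To(3)$. The paper argues directly: recode $\phi$ to a $1$-block code on an edge shift, cite the classical magic-word fact that every fiber of a \fto\ factor code from an \rSFT\ contains $d$ \emph{mutually separated} points, observe that a $d$-to-$1$ code therefore has every fiber $1$-separated, and conclude \bic ness from Lemma~\ref{lem:fs_iff_bic}. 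You instead form the self--fiber-product $\Sigma$ of $\phi$, note $\psi_1:\Sigma\to X$ is a \cto\ factor code from a \SFT\ to an \rSFT, invoke Proposition~\ref{lem:cto_implies_nonwandering} to make $\Sigma$ nonwandering, use equality of entropies to pin the diagonal $\Delta$ down as a clopen irreducible component, and then translate \rc\ (resp.\ \lc) into ``$\Delta$ is a sink (resp.\ source) component.'' Both routes are correct; your argument is somewhat longer but is conceptually pleasant because it exhibits \bic ness as a \emph{geometric} feature of the self--fiber-product (the diagonal being isolated), and it trades the magic-word lemma for Nasu's nonwandering proposition, which the paper already has on hand. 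The paper's version is shorter once one grants the separated-preimages fact, and it has the minor advantage of not needing the full strength of Proposition~\ref{lem:cto_implies_nonwandering}. One small nit in your $(3)\To(1)$: the equal-entropy hypothesis you invoke to upgrade subinvariance to equality is in fact automatic, since a resolving factor code between \rSFTs\ is \fto\ and hence entropy-preserving, so the hypothesis need not be assumed separately there.
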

\begin{proof}
If $\phi$ is open, then it is \cto\ by the remark following Lemma \ref{lem:open_has_degree}. Suppose $\phi$ is constantly $d$-to-$1$. Without loss of generality, we can assume that $X$ is an edge shift and $\phi$ is 1-block. It is well known that the preimage of each point in $Y$ contains a set of $d$ mutually separated points \cite{LM}. Since $\preimageAbs{\phi}{y} = d$, $\preimage{\phi}{y}$ is $1$-separated set for all $y \in Y$. So by Lemma \ref{lem:fs_iff_bic}, $\phi$ is \bic. Finally, if $\phi$ is \bic, then by recoding it is conjugate to a \bir\ factor code. By the Perron-Frobenius theory, it is a bi-covering code \cite[Theorem 8.2.2]{LM}, which is clearly open.
\end{proof}

These conditions do not coincide when we are in the sofic category. For the case when $Y$ is sofic, the following theorem holds. For the case when $X$ is irreducible, the result appears in \cite{CP} without proof.

\begin{thm} \label{thm:Y_Sofic}
    Let $\phi$ be a \fto\ code from a \SFT\ $X$ to an \rSofic\ $Y$. Then $\phi$ is open \ifff\ it is \cto.
    If $\phi$ is open, or equivalently, \cto, then $\phi$ is \bic.
\end{thm}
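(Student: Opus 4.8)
The plan is to prove the equivalence of openness and \cto ness in both directions and then read off \bic ness from Proposition~\ref{thm:open_cto_bic}. For the implication ``open $\To$ \cto'', suppose $\phi$ is \fto\ and open. By Proposition~\ref{lem:open_implies_nonwandering}, $X$ is a nonwandering \SFT, so it is the disjoint union of its (finitely many) components $X_1,\dots,X_r$, each an \rSFT, and each restriction $\phi|_{X_i}\colon X_i\to Y$ is open. Since $Y$ is irreducible, Lemma~\ref{lem:open_then_onto} makes every $\phi|_{X_i}$ onto, hence a \fto\ open factor code from an \rSFT; by Lemma~\ref{lem:open_has_degree} and the remark following it, $\phi|_{X_i}$ has a degree $d_i$ and $\abs{(\phi|_{X_i})^{-1}(y)}=d_i$ for every $y\in Y$. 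As the $X_i$ partition $X$, we get $\abs{\phi^{-1}(y)}=\sum_{i=1}^r d_i$ for all $y$, so $\phi$ is \cto.

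For ``\cto\ $\To$ open'' I would pass to the canonical cover. Let $(\X_A,\pi)$ be the canonical cover of $Y$, so $\pi\colon\X_A\to Y$ is a \rr\ (in particular \rc\ and \fto) factor code from an \rSFT, and form the fiber product $(\Sigma,\psi_1,\psi_2)$ of $\phi$ and $\pi$. After recoding the codes to be $1$-block, $\Sigma$ is a \SFT\ (a subshift of the \SFT\ $X\times\X_A$ cut out by the finite-type condition $\phi(x_i)=\pi(w_i)$), and since $\phi$ is \cto, in particular onto and \fto, the opposite code $\psi_2\colon\Sigma\to\X_A$ is \cto\ as well. Now Proposition~\ref{lem:cto_implies_nonwandering} applied to $\psi_2$ shows that $\Sigma$ is a nonwandering \SFT, all of its components $\Sigma_j$ are maximal, and each $\psi_2|_{\Sigma_j}$ is \cto. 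Being \fto, $\psi_2$ preserves entropy, so $h(\Sigma_j)=h(\Sigma)=h(\X_A)$; hence $\Sigma_j$ and $\X_A$ are \rSFTs\ \oee\ and Theorem~\ref{thm:Nasu} makes each $\psi_2|_{\Sigma_j}$ open, so $\psi_2$ is open (the $\Sigma_j$ being clopen and exhausting $\Sigma$). Finally, $\pi$ is onto, so Lemma~\ref{lem:open_fiberproduct}(2) yields that $\phi$ is open.

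With the equivalence established, the last sentence of the theorem is immediate: an open \cto\ code is \bic\ by Proposition~\ref{thm:open_cto_bic}. The step I expect to be the crux is the second direction, precisely because the available tools of Nasu (Proposition~\ref{lem:cto_implies_nonwandering} and Theorem~\ref{thm:Nasu}) require the range to be a \SFT, whereas here $Y$ is only sofic; routing through the canonical cover by means of the fiber product is what gets around this, and the two points that must be checked carefully are that the fiber product of the two \SFTs\ is again a \SFT\ and that \cto ness transfers to $\psi_2$. Both are routine, with everything else a direct appeal to results already proved.
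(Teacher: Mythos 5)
Your proof is correct and follows essentially the same route as the paper's: the ``\cto\ $\To$ open'' direction via the fiber product with the canonical cover, Proposition~\ref{lem:cto_implies_nonwandering}, Theorem~\ref{thm:Nasu}, and Lemma~\ref{lem:open_fiberproduct}(2) is the paper's argument verbatim. The only minor deviation is in ``open $\To$ \cto'', where you invoke Lemma~\ref{lem:open_has_degree} and the degree remark to get constant fibers on each component directly, whereas the paper first shows $Y$ is of finite type via Proposition~\ref{prop:open_sft_then_sft} and then applies Theorem~\ref{thm:Nasu}; both work and rely on the same structural Proposition~\ref{lem:open_implies_nonwandering}.
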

\begin{proof}
    First, suppose that $\phi$ is \cto\ and let $\pi : \X_B \to Y$ be the canonical cover so that $\X_B$ is irreducible. Consider the fiber product $(\Sigma,\psi_1,\psi_2)$ of $(X,\phi)$ and $(\X_B,\pi)$. Then $\psi_2$ is \cto. Since $X$ and $\X_B$ are of finite type, $\Sigma$ is also of finite type. Now by Proposition \ref{lem:cto_implies_nonwandering}, the restriction of $\psi_2$ to each component is \cto, hence is open and \bic\ by Theorem \ref{thm:Nasu}. So $\psi_2$ itself is open and \bic. Thus $\phi$ is also open and \bic\ by Lemma \ref{lem:open_fiberproduct}.

    Conversely, suppose  $\phi$ is open. By Lemma \ref{lem:open_implies_nonwandering}, $X$ is nonwandering and the restriction of $\phi$ to each component is open. By Proposition \ref{prop:open_sft_then_sft}, $Y$ is of finite type. Thus Theorem \ref{thm:Nasu} applies and $\phi$ is \cto.
\end{proof}

We prove the main theorem of this paper.

\begin{proof}[Proof of Theorem \ref{thm:Main1}]
    First, suppose $\phi$ is open and \rc. Let $\pi : \X_B \to Y$ be the canonical cover of $Y$ so that $\X_B$ is irreducible. Consider the fiber product of $(\Sigma,\psi_1,\psi_2)$ of $(X,\phi)$ and $(\X_B,\pi)$.

        \begin{diagram}
                &               &\Sigma     &               &\\
                &\ldTo^{\psi_1} &           &\rdTo^{\psi_2} &\\
            X   &               &           &               &\X_B\\
                &\rdTo_{\phi}   &           &\ldTo_{\pi}    &\\
                &               &Y
        \end{diagram}

    By Lemma \ref{lem:open_fiberproduct}, $\psi_2$ is open and \rc, and by Proposition \ref{lem:rc_open_then_nonwandering}, $\Sigma$ is indeed a \SFT. Now by Proposition \ref{lem:open_implies_nonwandering}, $\Sigma$ is nonwandering, and the restriction of $\psi_2$ to each component is open, hence \cto\ and \bic\ by Theorem \ref{thm:Nasu}. So $\psi_2$ itself is \cto\ and \bic. Hence $\phi$ is also \cto\ and \bic.

    Next, if $\phi$ is \cto\ and \rc, then the proof is similarly established, by using Propositions \ref{lem:rc_cto_then_nonwandering}, \ref{lem:cto_implies_nonwandering}, and Lemma \ref{lem:open_fiberproduct}.

    The case where $\phi$ is open and \cto\ holds by Proposition \ref{thm:open_cto_bic}.

    Finally, note that $\psi_1$ is onto, since $\pi$ is onto. Hence if all of these conditions hold, then $X$ is a factor of a nonwandering \SFT\ $\Sigma$, so it is nonwandering sofic.
\end{proof}

A shift space $X$ is called \textit{almost Markov} if it is a \bic\ factor of a \SFT. We call $X$ a \emph{strictly almost Markov shift} if it is almost Markov but not of finite type. An \textit{almost of finite type shift} (AFT) is an irreducible almost Markov shift \cite{BoyK}. It is known that an \rSofic\ is an AFT \ifff\ its canonical cover is also \lc\ \cite{BKM}.

\begin{cor}\label{cor:rcopen_ext}
    The following statements hold.
    \begin{enumerate}
        \item
            A \rc\ open extension of an \rSFT\ is of finite type.
        \item
            A \rc\ open extension of an irreducible strictly almost Markov shift is strictly almost Markov.
        \item
            A \rc\ open extension of an irreducible non-AFT \Sofic\ is sofic and not almost Markov.
    \end{enumerate}
    Each extension is nonwandering and all irreducible components in the extensions are maximal. The same results hold for \rc\ \cto\ extensions.
\end{cor}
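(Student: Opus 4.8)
The plan is to derive each clause from Theorem \ref{thm:Main1} (and its \cto\ analogue, which Theorem \ref{thm:Main1} bundles in via Proposition \ref{lem:rc_cto_then_nonwandering}) together with the structural results of Section \ref{sec:section2} and the known characterizations of AFT and almost Markov shifts. So let $\phi : X \to Y$ be a \rc\ open extension, where $Y$ is the \rSofic\ shift in question. The first step is the ``skeleton'' common to all three parts: apply Theorem \ref{thm:Main1} to conclude that $\phi$ is also \cto\ and \bic, and then apply Proposition \ref{lem:rc_open_then_nonwandering} to the relevant fiber-product argument — more precisely, run the fiber product $(\Sigma,\psi_1,\psi_2)$ of $(X,\phi)$ with the canonical cover $\pi : \X_B \to Y$ exactly as in the proof of Theorem \ref{thm:Main1}, and recall that $\Sigma$ is a nonwandering \SFT, $\psi_2$ is \cto/\bic, and $\psi_1$ is onto. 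Then Proposition \ref{lem:open_implies_nonwandering} (applied now with $X$ in the role of the domain, since once we know $Y$ sofic and $\phi$ \cto\ we are in the setting of Nasu's Proposition \ref{lem:cto_implies_nonwandering}) gives that $X$ is nonwandering and all its components are maximal, and the restriction of $\phi$ to each component is again open (hence \cto, \bic). This already establishes the ``each extension is nonwandering and all components are maximal'' tail of the statement, and reduces everything else to the single irreducible case: we may assume $X$ is irreducible with $h(X) = h(Y)$ and $\phi : X \to Y$ a \rc\ \cto\ \bic\ factor code.

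Next I would handle the three clauses in the irreducible case. For (1): if $Y = \X_A$ is an \rSFT, form again the fiber product with the canonical cover; by Proposition \ref{prop:open_sft_then_sft} (openness carries \SFT\ to \SFT, via $\psi_1$ open and $\Sigma$ of finite type) $X$ is of finite type. Actually the cleaner route, and the one I would write, is: the canonical cover of an \rSFT\ is the shift itself, so $\Sigma$ is the fiber product of two \SFTs, hence a \SFT; $\psi_2$ is \cto; and $\psi_1$ is an onto \cto\ open code from a \SFT, so by Proposition \ref{prop:open_sft_then_sft} (or directly Lemma \ref{lem:cto_sft_then_sft}) $X$ is of finite type. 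For (2): if $Y$ is irreducible strictly almost Markov — so $Y$ is a \bic\ factor of a \SFT\ but not itself a \SFT\ — I would observe that $X$ is a \bic\ factor of a \SFT\ (compose the \bic\ factor map $W \to Y$ from a \SFT\ $W$ with nothing; rather, lift: take the \SFT\ $W$ with $\rho : W \to Y$ \bic, and form the fiber product of $(X,\phi)$ and $(W,\rho)$ — call it $(\Sigma',\tau_1,\tau_2)$; then $\tau_2$ is \bic\ because $\rho$ pulls back, wait — it's $\tau_1$ that inherits \bic ness from $\rho$, and $\tau_2$ that inherits it from $\phi$; and $\Sigma'$ is a nonwandering \SFT\ by the same argument as above; $\tau_1 : \Sigma' \to X$ is onto since $\rho$ is, so $X$ is a \bic\ factor of a \SFT, i.e. almost Markov). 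Then $X$ is irreducible, hence an AFT, hence by the cited characterization $X$ is almost Markov; it remains to see $X$ is \emph{not} of finite type, which follows because a \rc\ \cto\ code is finite-to-one and if $X$ were of finite type then by Lemma \ref{lem:cto_sft_then_sft} $Y$ would be of finite type, contradicting the hypothesis on $Y$. For (3): if $Y$ is an irreducible non-AFT \Sofic, then $X$ is sofic (it is a factor of the nonwandering \SFT\ $\Sigma$ via $\psi_1$, by Theorem \ref{thm:Main1}); and if $X$ were almost Markov then, being irreducible, it would be AFT, and then $Y$ — a \cto, hence finite-to-one, hence entropy-preserving, factor — would inherit AFT-ness through $\phi$ (a \bic\ image of an AFT under a \bic\ code is AFT; more carefully: $\phi$ \bic\ and $X$ AFT forces $Y$ AFT, since the canonical cover of $Y$ then factors \bic ally through $X$'s, by uniqueness of canonical covers), contradicting the hypothesis. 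So $X$ is sofic and not almost Markov.

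The last sentence of the corollary — that the same results hold for \rc\ \cto\ extensions — is immediate, because Theorem \ref{thm:Main1} makes \rc+\cto\ and \rc+open\ equivalent packages (each implies $\phi$ is open, \cto, and \bic\ simultaneously), so every argument above applies verbatim.

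I expect the main obstacle to be clause (3), specifically the implication ``$X$ AFT and $\phi : X \to Y$ \bic\ $\Rightarrow$ $Y$ AFT.'' This needs a clean argument via canonical covers: if $\pi_X : \X_C \to X$ is the canonical cover of $X$ (which, since $X$ is AFT, is \bic) then $\phi \circ \pi_X : \X_C \to Y$ is a \rc\ (indeed \bic) factor code from an \rSFT, so it factors through the canonical cover $\pi_Y : \X_B \to Y$ via some code $\X_C \to \X_B$; one must then argue this induced map, combined with left-closingness coming down from $\pi_X$ and $\phi$, forces $\pi_Y$ to be \lc, whence $Y$ is AFT by \cite{BKM}. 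Pinning down that the left-closing property descends correctly through the factorization — rather than just the right-closing one, which is automatic — is the delicate point, and I would isolate it as a short separate lemma if the descent is not already available from the cited literature. Everything else is bookkeeping with the fiber-product functoriality (Lemma \ref{lem:open_fiberproduct} and the list of properties preserved by fiber products) and the entropy/component structure already in place.
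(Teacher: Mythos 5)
Your skeleton matches the paper's: apply Theorem \ref{thm:Main1} (via the fiber product with the canonical cover of $Y$) to extract $\Sigma$ a nonwandering \SFT\ with $\psi_1:\Sigma\to X$ onto, $\phi$ simultaneously open, \cto, and \bic, and $X$ nonwandering; and then handle (1)--(3) separately. For (1) the paper simply quotes Proposition \ref{lem:rc_open_then_nonwandering} directly (your ``cleaner route'' has a circular feel -- you describe $\Sigma$ as ``the fiber product of two \SFTs'' when $X$ is exactly the thing not yet known to be a \SFT\ -- but the skeleton already covers it). For (2) your argument is essentially the paper's: $\pi$ is \bic\ because $Y$ is AFT, $\psi_1$ inherits \bic ness, so $X$ is a \bic\ factor of the \SFT\ $\Sigma$, i.e.\ almost Markov; and $X$ cannot be of finite type or else Proposition \ref{prop:open_sft_then_sft} would force $Y$ to be of finite type. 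Note the paper never reduces to the irreducible case here and doesn't need to -- ``strictly almost Markov'' is a global property obtained directly from $\psi_1$.

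The genuine issue is in (3), and it is not the gap you flagged but an unnecessary detour. You isolate ``$X$ AFT and $\phi$ \bic\ $\Rightarrow Y$ AFT'' as the delicate point and propose routing it through canonical covers and a descent of left-closingness -- a route you yourself admit you cannot pin down. The paper bypasses this entirely by using the definitions as given: ``almost Markov'' means ``\bic\ factor of a \SFT,'' and \bic\ codes compose. So if $X$ were almost Markov, take a \bic\ cover $\rho:W\to X$ with $W$ a \SFT; then $\phi\circ\rho:W\to Y$ is a \bic\ factor code from a \SFT\ (composition of \bic\ is \bic), so $Y$ is almost Markov, and $Y$ being irreducible is then AFT -- contradicting the hypothesis. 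No appeal to uniqueness or minimality of canonical covers, no descent of left-closingness. Your instinct that the canonical-cover route is delicate is correct; the fix is to not take it.
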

\begin{proof}
    (1) is merely a restatement of Proposition \ref{lem:rc_open_then_nonwandering}.

    (2) If $\pXtoY$ is a \rc\ open code from a shift space to an AFT, then $\pi$ in the proof of Theorem \ref{thm:Main1} is \bic. Hence $\psi_1$ is \bic, so that $X=\psi_1(\Sigma)$ is an almost Markov shift. Next, if $X$ is of finite type, then $Y$ is also of finite type by Proposition \ref{prop:open_sft_then_sft}, which is a contradiction. So $X$ is strictly sofic.

    (3) Let $\pXtoY$ is a \rc\ open code from a shift space to an irreducible non-AFT \Sofic. Note that the extension shift is sofic and $\phi$ is \bic\ by Theorem \ref{thm:Main1}. If $X$ is almost Markov, then $Y$ has a \bic\ cover, namely, the \bic\ cover of $X$ followed by $\phi$, which is a contradiction. Hence $X$ is not almost Markov.

    The remainder of the statement follows from Theorem \ref{thm:Main1}.
\end{proof}

As stated in the introduction, the above result can be viewed as a generalization of  \cite{BH}. In a sense, \rc\ open (or \cto) extension preserves the structure of a shift space.

\begin{rem}
    The code constructed in \cite{Fie} is a \cto\ code from a nonsofic coded system to an \rSFT. By Corollary \ref{cor:rcopen_ext} it is not closing. Applying Theorem \ref{thm:Main1} it is not open.
\end{rem}

Now we consider codes between general shift spaces. If we weaken the condition (3) in Theorem \ref{thm:Main1}, then the following generalization holds. In contrast to the sofic case, we cannot use the theorem of Nasu, since there is no way to reduce to the finite type situation.

\begin{thm} \label{thm:Main2}
    Let $\phi$ be a code from a shift space $X$ to an irreducible shift space $Y$. Then any two of the following conditions imply the third:
        \begin{enumerate}
            \item $\phi$ is open.
            \item $\phi$ is \cto.
            \item $\phi$ is \bic.
        \end{enumerate}
\end{thm}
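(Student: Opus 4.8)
The plan is to dispatch two of the three pairwise implications with results already in hand and then prove the last one directly. If $\phi$ is open and \cto, then it is \bic\ by Proposition~\ref{thm:open_cto_bic}; if $\phi$ is open and \bic, then it is \cto\ by Corollary~\ref{cor:open_bic_then_cto} --- the only point at which irreducibility of $Y$ is used. So the entire content of the theorem is the implication: \emph{a \cto\ \bic\ code is open} (in fact irreducibility of $Y$ is not even needed for this). Since there is no finite-type model to reduce to, this will be a soft topological argument, in contrast with the situation in Theorem~\ref{thm:Main1}.

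By Lemma~\ref{obs_open} and $\sigma$-commutativity, it suffices to prove that for every $l$ and every $v \in \B_{2l+1}(X)$ the set $\phi({}_{-l}[v])$ is open in $Y$; it is automatically closed, being a continuous image of a compact set. Indeed, a clopen subset of a subshift is a finite union of cylinders, hence a union of central $(2r+1)$-cylinders for some $r$; letting $k$ be the largest such $r$ over the finitely many $v \in \B_{2l+1}(X)$, membership of a point $y$ in any $\phi({}_{-l}[v])$ is determined by $y_{[-k,k]}$, and the condition of Lemma~\ref{obs_open} follows, with $v = x_{[-l,l]}$, because $\phi(x) \in \phi({}_{-l}[v])$ then forces $y \in \phi({}_{-l}[v])$. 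It is moreover enough to do this for all large $l$, since for $l' < l$ the set $\phi({}_{-l'}[v'])$ is a finite union of the sets $\phi({}_{-l}[w])$ over $w \in \B_{2l+1}(X)$ extending $v'$.

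The heart of the matter is a uniform-fibre statement. Since $\phi$ is \bic, Lemma~\ref{lem:fs_iff_bic} gives $\epsilon > 0$ with distinct points of any fibre at distance $\geq \epsilon$; since $\phi$ is \cto, say $d$-to-$1$, fix $N$ with $2^{-N} < \epsilon$, so that for every $y \in Y$ the $d$ points of $\preimage{\phi}{y}$ lie in $d$ distinct central cylinders of radius $N$, hence of radius $l$ for every $l \geq N$. I claim that for each $l \geq N$ the assignment $\Phi_l(y) = \{\, x_{[-l,l]} : x \in \preimage{\phi}{y}\,\}$ is a locally constant function from $Y$ into the finite set of $d$-element subsets of $\B_{2l+1}(X)$; granting this, $\phi({}_{-l}[v]) = \{\, y \in Y : v \in \Phi_l(y)\,\}$ is clopen and the reduction above finishes the proof. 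To prove local constancy, suppose $y^{(j)} \to y$ with $\Phi_l(y^{(j)})$ equal to a fixed $d$-element set $S$ for all $j$ (possible, as $\Phi_l$ has finite range) while $S \neq \Phi_l(y)$. Label $\preimage{\phi}{y^{(j)}} = \{u^{j,1}, \dots, u^{j,d}\}$ so that $u^{j,i}_{[-l,l]}$ is the $i$-th word of $S$ for every $j$; this is legitimate since the fibre of $y^{(j)}$ has exactly $d$ points, occupying the $d$ distinct cylinders indexed by $S$. Passing to a subsequence, $u^{j,i} \to u^i$ for each $i$; by continuity $\phi(u^i) = y$, and $u^i_{[-l,l]}$ is still the $i$-th word of $S$, so $u^1, \dots, u^d$ are distinct. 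Then $\{u^1, \dots, u^d\}$ is a $d$-element subset of the $d$-element set $\preimage{\phi}{y}$, hence equals it, so $\Phi_l(y) = S$, a contradiction.

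The only step with genuine content is this local-constancy claim; everything around Lemma~\ref{obs_open} is routine compactness. It works because the two hypotheses cooperate: \bic ness provides a fibre-independent separation constant, so that the \emph{profile} $\Phi_l(y)$ is honestly a $d$-element set for every $y$, and \cto ness then forces the limit of the $d$-point fibres of $y^{(j)}$ --- a priori only known to lie inside $\preimage{\phi}{y}$ --- to fill it completely; drop either hypothesis and the profile could collapse in the limit, breaking the argument.
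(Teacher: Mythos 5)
Your proof is correct. You dispatch two of the implications exactly as the paper does, via Proposition~\ref{thm:open_cto_bic} (open $+$ \cto\ $\Rightarrow$ \bic) and Corollary~\ref{cor:open_bic_then_cto} (open $+$ \bic\ $\Rightarrow$ \cto), and then supply a direct argument for the remaining implication \cto\ $+$ \bic\ $\Rightarrow$ open. The paper instead obtains that last implication from Proposition~\ref{thm:cto_then_open_iff_bic}, which runs a cycle through cross sections: \bic\ $\Rightarrow$ there are $d$ disjoint continuous cross sections whose images cover $X$ $\Rightarrow$ every point of $X$ lies in the image of some cross section $\Rightarrow$ open. The core technical fact is the same in both arguments --- the separation constant from Lemma~\ref{lem:fs_iff_bic}, together with the constant fibre size and a compactness argument, forces the set of central $(2p+1)$-blocks appearing in $\preimage{\phi}{y}$ to depend only on a bounded central block of $y$. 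You cash this out directly as local constancy of your profile map $\Phi_l$, which immediately makes each $\phi({}_{-l}[v])$ clopen; the paper cashes it out by constructing explicit cross sections $f_1,\dots,f_d$, then deduces openness from the cross-section criterion (the step (\ref{cross-ontocross}) $\Rightarrow$ (\ref{cross-opencto}) there). Your route is a little shorter if the only goal is Theorem~\ref{thm:Main2}; the paper's route additionally yields the cross-section characterization in Proposition~\ref{thm:cto_then_open_iff_bic}, which is stated as a result of independent interest and revisited in Example~\ref{ex:only_open}. Your parenthetical remark that irreducibility of $Y$ enters only through Corollary~\ref{cor:open_bic_then_cto} agrees with the remark following Theorem~\ref{thm:Main2} in the paper.
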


\begin{proof}
    The theorem follows from Corollary \ref{cor:open_bic_then_cto}, Proposition \ref{thm:open_cto_bic}, and the following proposition \ref{thm:cto_then_open_iff_bic}.
\end{proof}

Let $\phi : X \to Y$ be a continuous map between topological spaces. A continuous map $f : Y \to X$ is called a \emph{cross section} of $\phi$ if $\phi(f(y)) = y$ for all $y$ in $Y$. Any open factor code between shift spaces has a cross section \cite{Hed}. We say $\phi$ has $d$ \emph{disjoint cross sections} if there exist $d$ cross sections $f_i : Y \to X$ such that $f_i (Y) \cap f_j (Y) = \emptyset$ for all $i \neq j$. We adopt the ideas from \cite{Kur} to prove the following result.

\begin{prop} \label{thm:cto_then_open_iff_bic}
    Let $\pXtoY$ be a $d$-to-$1$ code between shift spaces. Then the following are equivalent.
    \begin{enumerate}
        \item \label{cross-opencto}
            $\phi$ is open.
        \item \label{cross-fscto}
            $\phi$ is \bic.
        \item \label{cross-dcross}
            $\phi$ has $d$ disjoint cross sections such that the union of their images is $X$.
        \item \label{cross-ontocross}
            For any $x \in X$, there exists a cross section $f$ such that $x \in f(Y)$.
    \end{enumerate}
\end{prop}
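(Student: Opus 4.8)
The plan is to establish the cycle of implications $(\ref{cross-opencto}) \To (\ref{cross-dcross}) \To (\ref{cross-ontocross}) \To (\ref{cross-fscto}) \To (\ref{cross-opencto})$, which is the shortest route that visits all four statements. The implication $(\ref{cross-dcross}) \To (\ref{cross-ontocross})$ is trivial. The implication $(\ref{cross-fscto}) \To (\ref{cross-opencto})$: if $\phi$ is $d$-to-$1$ and \bic, then $\phi$ is \cto, so by Corollary \ref{cor:open_bic_then_cto}'s converse direction — actually more directly, by a local-homeomorphism argument — I would show openness. Here is the cleaner way: by Lemma \ref{lem:fs_iff_bic} there is $\epsilon>0$ so that distinct points in a fiber are $\epsilon$-apart. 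Given a central $(2l+1)$-cylinder $C = {}_{-l}[x_{-l}\cdots x_l]$ containing $x$, after shrinking we may assume $2^{-l} < \epsilon$, so $C$ contains at most one point of each fiber and $\phi|_C$ is injective; then $\phi|_C$ is a homeomorphism onto its (compact) image $\phi(C)$. To see $\phi(C)$ is open, I would argue that near $y = \phi(x)$ every point of $Y$ lifts into $C$: if $y^{(n)} \to y$ with $y^{(n)} \notin \phi(C)$, pick $x^{(n)} \in \preimage{\phi}{y^{(n)}}$; a subsequence converges to some point in $\preimage{\phi}{y}$, but the $d$ points of that fiber are $\epsilon$-separated and exactly one lies in $C$ while none of the chosen $x^{(n)}$ do — a counting contradiction exploiting that $\phi$ is exactly $d$-to-$1$ on a residual set. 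This gives openness.

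The two substantive implications are $(\ref{cross-opencto}) \To (\ref{cross-dcross})$ and $(\ref{cross-ontocross}) \To (\ref{cross-fscto})$. For $(\ref{cross-opencto}) \To (\ref{cross-dcross})$: $\phi$ open implies $Y$ irreducible-or-not — wait, $Y$ need not be irreducible here, but $\phi$ is a factor code since it is $d$-to-$1$. I would first note $\phi$ has a cross section $f_1$ by \cite{Hed}. The idea, following \cite{Kur}, is to build the cross sections inductively: having found disjoint cross sections $f_1,\dots,f_j$ with $j < d$, the set $Y_j = \{ y : \preimage{\phi}{y} \not\subset f_1(Y)\cup\cdots\cup f_j(Y)\}$ is all of $Y$ (since some fiber has $d > j$ points and openness/continuity propagate this), and on $Y$ one can continuously select a point of $\preimage{\phi}{y}$ avoiding the previously chosen sheets. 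Making "continuously select" precise is the crux: I would use that an open $d$-to-$1$ code is, by Proposition \ref{thm:open_cto_bic} combined with the already-proved $(\ref{cross-fscto})\To(\ref{cross-opencto})$ direction — no, that's circular. Instead I would directly use openness: the image of a small cylinder is open, so the "graph" of a local section is open-and-closed in the appropriate space, and a compactness/clopen-decomposition argument patches local sections into a global one disjoint from the earlier $f_i(Y)$. Finally $\bigcup_i f_i(Y)$ is a closed $\sigma$-invariant set meeting every fiber in exactly $d$ points, hence equals $X$ because $\phi$ is $d$-to-$1$.

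For $(\ref{cross-ontocross}) \To (\ref{cross-fscto})$: suppose $\phi$ is not \bic. By Lemma \ref{lem:fs_iff_bic}, for every $n$ there is $y^{(n)} \in Y$ and distinct $x^{(n,1)}, x^{(n,2)} \in \preimage{\phi}{y^{(n)}}$ with $x^{(n,1)}_{[-n,n]} = x^{(n,2)}_{[-n,n]}$. Pass to a subsequence so $y^{(n)} \to y$, $x^{(n,1)} \to x^{(1)}$, $x^{(n,2)} \to x^{(2)}$; then $x^{(1)} = x^{(2)} =: x$ and $\phi(x) = y$. By hypothesis $(\ref{cross-ontocross})$ there is a cross section $f$ with $f(y) = x$. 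For large $n$, $f(y^{(n)})$ is close to $x$, hence (being a point of $\preimage{\phi}{y^{(n)}}$) coincides with exactly one of the $x^{(n,i)}$ once we are inside the $\epsilon$-scale where fibers would separate — but wait, we are assuming they do \emph{not} separate, so I must instead extract a genuine contradiction with the count $d$. The argument: the $d$ points of $\preimage{\phi}{y^{(n)}}$ include the two near-coincident ones $x^{(n,1)}, x^{(n,2)}$, so in the limit they furnish \emph{fewer than $d$} distinct limit points of $\preimage{\phi}{y^{(n)}}$; yet every point of $\preimage{\phi}{y}$ is such a limit (using that each is $f'(y)$ for a section $f'$, obtained by translating the hypothesis along $\sigma$-orbits, and sections are continuous), so $\preimage{\phi}{y}$ has $< d$ points — contradicting $d$-to-$1$. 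The main obstacle I anticipate is making the inductive construction of disjoint cross sections in $(\ref{cross-opencto}) \To (\ref{cross-dcross})$ rigorous: one must produce a \emph{global} continuous section avoiding finitely many previously chosen sheets, and the clean way is to exploit that openness makes the relevant "choice sets" clopen, so that a finite clopen partition of $Y$ pieced together from cylinder-images yields the section; this bookkeeping, rather than any deep idea, is where the work lies.
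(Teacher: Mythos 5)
Your cycle $(\ref{cross-opencto}) \To (\ref{cross-dcross}) \To (\ref{cross-ontocross}) \To (\ref{cross-fscto}) \To (\ref{cross-opencto})$ is a legitimate alternative layout, and two of your new legs are sound: the direct $(\ref{cross-fscto}) \To (\ref{cross-opencto})$ via $\epsilon$-separation and a fiber-limit argument is correct (and is actually cleaner than the paper's indirect route $(\ref{cross-fscto}) \To (\ref{cross-dcross}) \To (\ref{cross-ontocross}) \To (\ref{cross-opencto})$), and your $(\ref{cross-ontocross}) \To (\ref{cross-fscto})$ sketch can be made to work, though it needs the extra bookkeeping of passing to a subsequence on which the bijection between $\preimage{\phi}{y}$ and $\preimage{\phi}{y^{(n)}}$ is constant, so that the near-coincidence of $x^{(n,1)},x^{(n,2)}$ forces two distinct points of $\preimage{\phi}{y}$ to coincide. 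The paper instead proves the much shorter $(\ref{cross-ontocross}) \To (\ref{cross-opencto})$: given $x\in U$, $y^{(n)}\to\phi(x)$ with $y^{(n)}\notin\phi(U)$, pick a cross section $f$ through $x$ and note $f(y^{(n)})\to x$, so eventually $f(y^{(n)})\in U$ and $y^{(n)}\in\phi(U)$, a contradiction — one section, no counting.

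The genuine gap is $(\ref{cross-opencto}) \To (\ref{cross-dcross})$, and you acknowledge it yourself: you never actually carry out the ``continuously select a point of the fiber avoiding the previous sheets'' step, you only gesture at clopen patching. The inductive scheme also has a structural problem: to continue past $j<d$ sheets you need not just that some fiber has an unused point, but a \emph{global continuous} choice of an unused point, and nothing in the plan forces the local choices to agree on overlaps. Moreover, your worry that invoking Proposition \ref{thm:open_cto_bic} is circular is misplaced: that proposition is proved earlier in the paper from Lemma \ref{lem:fs_iff_bic} alone and does not depend on the present result, so $(\ref{cross-opencto}) \To (\ref{cross-fscto})$ is freely available. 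The paper therefore attacks $(\ref{cross-fscto}) \To (\ref{cross-dcross})$, where bi-closing supplies the $\epsilon$-separation that makes the construction go through at the block level: choose $p$ with $2^{-p}<\epsilon$, show by a nested-closed-sets compactness argument that there is $n\ge p$ so that every word $w\in\B_{2n+1}(Y)$ has \emph{exactly} $d$ central $(2p+1)$-blocks of preimages, label these $g_1(w),\dots,g_d(w)$, and set $f_i(y)$ to be the unique preimage of $y$ with central block $g_i(y_{[-n,n]})$. Disjointness and covering are then automatic, and continuity follows because each $V_i=\{x : (g_i\circ\phi)(x_{[-n,n]})=x_{[-p,p]}\}$ is clopen and $\phi|_{V_i}$ is a continuous bijection onto the compact $Y$, so its inverse $f_i$ is continuous. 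That block-level, all-$d$-at-once construction — rather than an inductive peeling-off of sheets — is the idea your plan is missing, and without it the proposal does not close the cycle.
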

\begin{proof}
    (\ref{cross-opencto}) $\To$ (\ref{cross-fscto}): This holds by Proposition \ref{thm:open_cto_bic}.

    (\ref{cross-fscto}) $\To$ (\ref{cross-dcross}): We can assume that $\phi$ is a 1-block code. Since $\phi$ is \bic, we can find an $\epsilon > 0$ satisfying the condition in Lemma \ref{lem:fs_iff_bic}. Take $p \in \setN$ with $2^{-p} < \epsilon$.

    For each $n \geq p$ and $w \in \B_{2n+1}(Y)$, define
    $$\D(w) = \set{x_{[-p,p]} : x \in X,\phi(x_{[-n,n]}) = w} \subset \B_{2p+1}(X)$$
    and
    $$d(w) = |\D(w)|.$$
    Note that $d(w) \geq d$ for all $w$ by Lemma \ref{lem:fs_iff_bic}. Now define
    $$Y_n = \set{ y \in Y : d(y_{[-n,n]}) > d }.$$
    Each $Y_n$ is a closed set and the family of $Y_n$'s is nested. If $Y_n \neq \emptyset$ for all $n$, then any $y \in \bigcap Y_n$ has at least $d+1$ preimages by compactness and Lemma \ref{lem:fs_iff_bic}, which is a contradiction. So there is an $n \geq p$ with $Y_n = \emptyset$. Hence $d(w) = d$ for all $w \in \B_{2n+1}(Y)$.

    By the choice of $n$, we can define $g_1, \cdots, g_d : \B_{2n+1}(Y) \to \B_{2p+1}(X)$ satisfying
    $$\set{g_1(w), \cdots, g_d(w)} = \D(w).$$
    Finally, we define cross sections $f_1, \cdots, f_d : Y \to X$ as follows: For each $i = 1, \cdots, d$, $f_i (y) \text{ is the unique point in } \preimage{\phi}{y} \text{ such that } f_i (y)_{[-p,p]} = g_i (y_{[-n,n]}).$ Then all conditions are clearly satisfied except continuity.

    Fix $i$ and define an open and closed set
    $$V_i = \bigcup \set{ {}_{-n}[u] : u \in \B_{2n+1}(X), (g_i \circ \phi)(u) = u_{[n+1-p,n+1+p]} }.$$
    Then $x \in V_i$ \ifff\ $x = (f_i \circ \phi) (x)$. So $f_i^{-1} = \phi|_{V_i} : V_i \to Y$ is a bijective continuous map between compact metric spaces. Hence $f_i$ is continuous, as desired.

    It is clear that (\ref{cross-dcross}) implies (\ref{cross-ontocross}).

    (\ref{cross-ontocross}) $\To$ (\ref{cross-opencto}): Suppose that $\phi$ is not open.
    Then there is an open set $U$ of $X$ such that $\phi(U)$ is not open. Hence we can find a point $x \in U$ and a sequence $\set{y^{(n)}}_{n=1}^{\infty}$ in $Y$ such that $y^{(n)} \notin \phi(U)$ for any $n \in \setN$ and $y^{(n)} \to \phi(x)$. Find a cross section $f$ with $x \in f(Y)$. Then $x = (f \circ \phi) (x)$. Let $z^{(n)} = f (y^{(n)})$ for each $n$. Then by the continuity of $f$, we have $z^{(n)} \to x$. But since $U$ is open, $z^{(n)} \in U$, and hence $y^{(n)} \in \phi(U)$  for all large $n$, which is a contradiction. So $\phi$ is open.
\end{proof}

\begin{rem}
    In Theorem \ref{thm:Main2}, the irreducibility of $Y$ is needed only to prove that when $\phi$ is open and \bic, it is \cto. If $Y$ is not irreducible, then there is a simple counterexample: $X = \{ (01)^\infty, (10)^\infty, 2^\infty\}$, $Y=\{ a^\infty, b^\infty \}$, and $\phi((01)^\infty) = \phi((10)^\infty) = a^\infty, \phi(2^\infty) = b^\infty$.
\end{rem}

\vspace{1cm}

\section{Examples} \label{sec:examples}

In sofic category, all the three conditions in Theorem \ref{thm:Nasu} are different, that is, there is no implication between any two of those conditions. We give three examples, in each of which either $X$ or $Y$ (or both) is strictly sofic.

\begin{exam} \label{ex:only_bic}
    Let $Y$ be the even shift with its canonical cover $\phi : \X_A \to Y$, where $\X_A$ is the edge shift formed by the underlying graph in Figure \ref{fig:only_bic} and $\phi$ is given by the labeling. Note that $\phi$ is \bir\ and so $Y$ is an AFT.

    \begin{Figure}[h]
        \includegraphics[height=2cm]{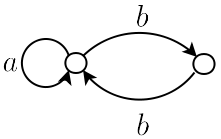}
        \label{fig:only_bic}
        \caption{A presentation of $Y$}
    \end{Figure}

    Since $\preimageAbs{\phi}{y} = 1$ for all $y \in Y$ except $y = b^\infty$, it follows that $\phi$ is not \cto. Also, by Proposition \ref{prop:open_sft_then_sft}, $\phi$ is not open. Thus $\phi$ is a \bic\ code which is neither open nor \cto.

\end{exam}

\begin{exam} \label{ex:only_open}
    Let $X$ be the sofic shift obtained by identifying two fixed points in the full 2-shift, $Y = \{a,b\}^\setZ$ the full 2-shift, and $\pXtoY$ the subscript dropping code (see Figure \ref{fig:only_open}).

    \begin{Figure}[h]
        \includegraphics[height=2cm]{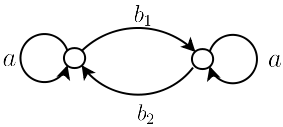}
        \label{fig:only_open}
        \caption{A presentation of $X$.}
    \end{Figure}

    Note that $\phi$ is a 1-block code and $X$ is strictly sofic. Given $u \in \B(X)$ and $l \geq 0$, let $ v = \phi(u)$. Then for given $y \in {}_l[v]$, it is easy to find an $x$ in ${}_l[u]$ such that $\phi(x) = y$. Hence $\phi({}_l[u]) = {}_l[v]$ so that $\phi$ is open.
    Every point except $a^{\infty}$ has two preimages, while $a^{\infty}$ has only one preimage. Hence $\phi$ is not \cto. Also, it identifies two points $a^{\infty}.(b_1 b_2)^{\infty}$ and $a^{\infty}.(b_2 b_1)^{\infty}$, so $\phi$ is not \rc. Similarly it is not \lc. Thus $\phi$ is an open code which is neither \cto\ nor closing.
    This example also shows that an open extension of an \rSFT\ need not be of finite type (see Corollary \ref{cor:rcopen_ext} (1)).

    We define a cross section $f_1$ as follows: Let $y \in Y$. If $y_i = b$ with $i \geq 0$, replace it with $b_1$ if the number of occurrence of $b$ in $y_{[0,i]}$ is odd and otherwise with $b_2$. If $y_i = b$ with $i < 0$, replace it to $b_2$ if the number of occurrence of $b$ in $y_{[i,0)}$ is odd and otherwise with $b_1$. The resulting point is $f_1(y)$. Define $f_2$ similarly by exchanging the roles of $b_1$ and $b_2$. Then these two cross sections cover $X$ and are disjoint except one point $a^\infty$. Hence this example satisfies the condition (4) in Proposition \ref{thm:cto_then_open_iff_bic}. Note that the implication (4) $\To$ (1) in that proposition holds for any codes (not necessarily \cto).
\end{exam}

\begin{exam} \label{ex:only_cto}
    Let $X$ be a \Sofic\ defined by the labeling in Figure \ref{fig:only_cto}, $Y$ the even shift as shown in Figure \ref{fig:only_bic}, and $\pXtoY$ the subscript dropping code. This example appears in \cite{BH}.

    \begin{Figure}[h]
        \includegraphics[height=4cm]{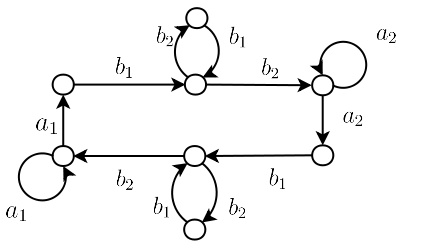}
        \label{fig:only_cto}
        \caption{A presentation of $X$.}
    \end{Figure}

    It is easy to see that $\phi$ is 2-to-1 everywhere. We will show that $\phi$ is neither closing nor open.
    First, since $\phi$ identifies the points $(b_1 b_2)^{\infty}.a_i(b_1 b_2)^{\infty}$ for $i = 1,2$, it is not closing.
    Now, if $\phi$ is open, then $\phi([b_1]) = \bigcup_{i=1}^k C_i$ for cylinders $C_i$ defined by central $2l+1$ blocks in $Y$, for some $l \geq 0$. Since $b^{\infty}$ is in $\phi([b_1])$, there exists $i$ with $b^{\infty} \in C_i$, so $C_i$ is of the form ${}_{-l}[b \cdots b]$. Now, let $y = b^{\infty}.b^{2l+1}a^{\infty}$. Note that $y$ is in $C_i$. But if $x \in X$ with $x_0 = b_1$, then the first occurrence of $a_1$ or $a_2$ in $x_{[0,\infty)}$ must be in an even coordinate. So $y$ cannot be in $\phi([b_1])$, which is a contradiction. Thus $\phi$ is not open.

    In contrast to the finite type case, where any \cto\ code has a cross section, this example shows that a \cto\ code may have no cross section in general. Indeed, if $f : Y \to X$ is a cross section, we can assume that $f(b^\infty) = (b_1 b_2)^\infty$. Since $f$ must send $a^\infty b^{2n+1} . b^\infty$ to $a_i^\infty (b_1 b_2)^n b_1 . (b_2 b_1)^\infty$, it cannot be continuous, which is a contradiction.
\end{exam}

\textit{Acknowledgment.} I am grateful to my advisor, Sujin Shin, for encouragement and guidance. Her valuable comments have improved an earlier version of this paper greatly.


\vspace{1cm}
\bibliographystyle{amsplain}

\end{document}